\theoremstyle{plain}
\newtheorem{Thm}{Theorem}[section]
\newtheorem{Prop}[Thm]{Proposition}
\newtheorem{Coro}[Thm]{Corollary}
\newtheorem{Lem}[Thm]{Lemma}
\theoremstyle{definition}
\newtheorem{Defn}[Thm]{Definition}
\newtheorem{Conj}[Thm]{Conjecture}
\newtheorem{Ex}[Thm]{Example}
\newtheorem{Remk}[Thm]{Remark}
\newcommand{\ZZ}{\mathbb{Z}}
\newcommand{\PP}{\mathbb{P}}
\newcommand{\II}{\mathcal{I}}
\newcommand{\G}{\mathcal{G}}
\newcommand{\SSS}{\mathcal{S}}
\newcommand{\MO}{M_\omega}
\newcommand{\HB}{h_\bullet}
\newcommand{\GB}{g_\bullet}
\newcommand{\HIB}{h^{(i)}_\bullet}
\newcommand{\gin}{\mathrm{gin}}
\newcommand{\OT}{\tilde{\omega}}
\newcommand{\AT}{\tilde{\alpha}}
\newcommand{\RA}{\rightarrow}
\numberwithin{equation}{section}
\subjclass[2000]{13A02, 13C05, 13D40, 13E10, 13P10}
\begin{document}
\allowdisplaybreaks
\title[Almost reverse lexicographic ideals and Fr\"{o}berg sequences]
      {Almost reverse lexicographic ideals and Fr\"{o}berg sequences}

\author{Jung Pil Park}
\address{Algebraic Structures and Its Application Research Center(ASARC),KAIST, Daejeon 305-340, South Korea }
\email{jppark@math.snu.ac.kr} %
\thanks{This research was supported by Basic Science Research Program through the National Research Foundation of Korea(NRF) funded by the Ministry of Education, Science and Technology (2009-0063180)}

\date{\today}

\begin{abstract}
   We study almost reverse lexicographic ideals in a polynomial ring over a field of arbitrary characteristic. We give a criterion for a given sequence of nonnegative integers to be the Hilbert function of an almost reverse lexicographic ideal in the polynomial ring. Then it will be shown that every Fr\"{o}berg sequence satisfies this criterion.
\end{abstract}

\maketitle

\section{\sc Introduction}
  In 1927, Macaulay \cite{Ma} showed that for a given sequence $\HB=(h_0,h_1,\ldots)$ of nonnegative integers $h_i$ with $h_0=1$, there is a homogeneous ideal $I$ in a polynomial ring $R=k[x_1,\ldots,x_n]$ over a field $k$ such that its Hilbert function $H(R/I,i) := \dim_k (R/I)_i$ is equal to $h_i$ for each $i \ge 0$ if and only if the sequence satisfies the Macaulay bound. To prove this beautiful theorem, Macaulay used special monomial ideals, lex-segment ideals. Since then, the study of the shape of the Hilbert function reflecting the properties of a given standard $k$-algebra $R/I$ became a very active research area.

  Fr\"{o}berg conjecture can be considered as one of the most outstanding problems in this research area. Fr\"{o}berg \cite{Fr} conjectured in 1985 that
  \begin{Conj}[Fr\"{o}berg]
    If $I$ is an ideal generated by generic forms $F_1,\ldots,F_r$ in $R$ with $\deg F_i = d_i$, then the Hilbert series $S_{R/I}(z) := \sum_{i=0}^{\infty} H(R/I,i) z^i \in \ZZ[[z]]$ of $R/I$ is given by
    \[
        S_{R/I}(z) = \left|
          \frac{\prod_{i=1}^{r}(1-z^{d_i})}{(1-z)^n}\right|,
    \]
    where for a formal power series $\sum a_iz^i$ with integer coefficients we
    let $|\sum a_iz^i| = \sum b_iz^i$ with $b_i = a_i$ if $a_j > 0$ for
    all $0 \le j \le i$, and $b_i = 0$ if $a_j = 0$ for some $j \le i$.
  \end{Conj}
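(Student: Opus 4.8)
This is one of the central open problems of the area, and I do not expect a complete proof; the plan is to isolate the combinatorial core that this paper supplies and to make explicit the one geometric input that remains. The opening move is a reduction to monomial ideals: the Hilbert series of $R/I$ is unchanged when $I=(F_1,\dots,F_r)$ is replaced by its generic initial ideal $\gin(I)$ with respect to the reverse lexicographic order, so it suffices to compute $S_{R/\gin(I)}(z)$. The Moreno-Soc\'{\i}as conjecture predicts the shape of this monomial ideal: when the $F_i$ are generic, $\gin(I)$ is an \emph{almost reverse lexicographic} ideal, i.e.\ a strongly stable ideal whose minimal generators of each degree $d$ are the reverse-lexicographically largest degree-$d$ monomials not already contained in the part of the ideal generated in degrees $<d$. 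Granting this, and granting the classical lower bound $S_{R/I}(z)\ge\bigl|\prod_{i=1}^{r}(1-z^{d_i})/(1-z)^n\bigr|$ --- which holds for arbitrary forms and is proved by adjoining the $F_i$ one at a time, each step lowering the Hilbert function by at most the expected amount --- the conjecture would follow once one shows that the almost reverse lexicographic ideal attached to the degree sequence $d_1,\dots,d_r$ exists and meets this lower bound with equality. Those last two assertions are purely combinatorial, and they are exactly what this paper is built to handle.

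For the combinatorial part I would use the following. An almost reverse lexicographic ideal is reconstructed from its Hilbert function $\HB=(h_0,h_1,\dots)$ by a greedy, degree-by-degree rule: once the ideal is fixed below degree $d$, one is \emph{forced} to adjoin the reverse-lex-largest $\dim_k R_d-h_d-\dim_k(I_{<d})_d$ monomials of degree $d$ that lie outside $I_{<d}$. Thus $\HB$ is the Hilbert function of some almost reverse lexicographic ideal precisely when the set produced by this rule is closed under multiplication by \emph{every} variable --- not merely by the variables forced by strong stability --- and translating that closure requirement into inequalities on the $h_i$ is exactly the criterion established below.

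The real work is then to verify that every Fr\"oberg sequence --- the coefficient sequence of $\bigl|\prod_{i=1}^{r}(1-z^{d_i})/(1-z)^n\bigr|$ --- satisfies that criterion. I would dispose of $r\le n$ separately, where $R/I$ is a complete intersection and the rational function is already a polynomial or power series with non-negative coefficients, so $|\cdot|$ has no effect and the statement is classical; in the range $r>n$ the product represents a polynomial and $|\cdot|$ simply deletes everything from the first non-positive coefficient onward. There I would argue by induction on $r$, passing from $F_1,\dots,F_{r-1}$ to $F_1,\dots,F_r$: adjoining the extra generic form $F_r$ of degree $d_r$ alters the Hilbert function by the expected effect of multiplication by $(1-z^{d_r})$, capped so as to stay non-negative and held at $0$ from the first degree where it would vanish; verifying that the resulting sequence still satisfies the criterion is the induction step, which comes down to a binomial identity together with careful bookkeeping of where the first sign change falls --- routine but delicate.

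The main obstacle, and the reason this argument falls short of a proof, is the Moreno-Soc\'{\i}as conjecture itself. That $\gin(I)$ of an ideal of generic forms is almost reverse lexicographic is known only in special cases --- for instance $n\le 3$, or small $r$, or certain generic complete intersections --- and in general it appears to be as hard as Fr\"oberg's conjecture. Everything downstream --- the description of which Hilbert functions arise from almost reverse lexicographic ideals and the verification that Fr\"oberg sequences are among them --- is self-contained, and that is precisely what the present paper settles; what is left open is the statement that the generic initial ideal has the predicted combinatorial shape.
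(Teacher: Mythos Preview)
The statement is Fr\"oberg's conjecture; the paper does not prove it and does not claim to, so there is no proof to compare against. You correctly flag this and correctly name the Moreno--Soc\'ias conjecture as the missing geometric input. Where your outline goes wrong is in the bridge you build from Moreno--Soc\'ias to Fr\"oberg via the paper's main result.

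You write that, granting $\gin(I)$ is almost reverse lexicographic and granting the classical lower bound, the conjecture would follow once one shows that ``the almost reverse lexicographic ideal attached to the degree sequence $d_1,\dots,d_r$ exists and meets this lower bound with equality,'' and you identify this as what the paper supplies. But the almost reverse lexicographic ideal $J$ the paper constructs from the Fr\"oberg sequence $|n;d_1,\dots,d_r|$ is \emph{not} generated by $r$ forms of degrees $d_1,\dots,d_r$; it typically has many more minimal generators (the paper's own final example, $|3;3,3,5|$, produces thirteen). Hence $J$ does not lie in the family parametrizing $r$-tuples of forms of the given degrees, and no semicontinuity argument compares $H(R/I)$ with $H(R/J)$. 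Nor does knowing that $\gin(I)$ is almost reverse lexicographic and that \emph{some} almost reverse lexicographic ideal has the Fr\"oberg Hilbert function force $\gin(I)$ to be that ideal: almost reverse lexicographic ideals are uniquely determined by their Hilbert functions, but that is exactly the quantity in question.

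The route from Moreno--Soc\'ias to Fr\"oberg that the paper's introduction actually sketches does not use the paper's main theorem at all. It uses the Cho--Park result \cite{CP} that $\gin(I)$ almost reverse lexicographic implies $R/I$ has the strong Lefschetz property; one then inducts on $r$, using SLP for $R/(F_1,\dots,F_{r-1})$ to force maximal rank of multiplication by a generic $F_r$, which gives $S_{R/I}(z)=|(1-z^{d_r})S_{R/I'}(z)|$ and hence the Fr\"oberg formula. The paper's contribution --- that every Fr\"oberg sequence is ``unimodal at each tail'' and hence realized by an almost reverse lexicographic ideal --- is a structural result about these sequences and, together with the second Cho--Park transfer theorem quoted in the introduction, points toward an attack on Moreno--Soc\'ias itself; it is not the combinatorial step that closes the gap in the argument you describe.
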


  This conjecture is true if $r \le n$ because in this case every generic forms consist a regular sequence. Stanley \cite{St} and Watanabe \cite{Wa} showed that the conjecture holds if $r=n+1$ when the characteristic of the base field $k$ is $0$. And Fr\"{o}berg \cite{Fr} proved the conjecture holds for the case $n=2$, and Anick \cite{An} showed for the case $n=3$ in an infinite field, respectively. Hochster and Laksov \cite{HL} showed that $H(R/I,d+1)$ is given by the formula, where $I$ is the ideal generated by $m$ generic forms $F_1,\ldots,F_m$ such that $\deg F_{m} = 1 + \max\{\deg F_i | 1 \le i \le m-1 \}$. Aubry \cite{Au} extended their work to certain other values.

  To study Fr\"{o}berg conjecture, many approaching methods have been developed. The strong Lefschetz property can be said to be one of the most useful methods. For an Artinian ring $A=R/I$, we say that $A$ has the strong (resp.\ weak) Lefschetz property if there exists a linear form $L$ in $R$ such that the multiplication $\times L^{i} : (R/I)_d \rightarrow (R/I)_{d+i}$ is either injective or surjective for each $d$ and $i$(resp.\ for $i=1$). Suppose that an Artinian ring $R/I$ has the strong Lefschetz property. For a degree $d$ form $F \in R$ we have the following exact sequence
  \[
      0 \RA ((I:F)/I)_{t-d} \RA (R/I)_{t-d} \xrightarrow[\quad]{\times F} (R/I)_{t}
         \rightarrow (R/I+(F))_{t}
         \rightarrow 0.
  \]
  If $F$ is generic, then the Hilbert function of $R/I+(F)$ is given by
  \begin{equation*}
     H(R/I+(F),t) = \max\{0, H(R/I,t)-H(R/I,t-d)\}.
  \end{equation*}
  Hence the Hilbert series of $R/I+(F)$ satisfies
  \[
     S_{R/I+(F)}(z) = |(1-z^{d})S_{R/I}(z)|.
  \]
  This shows that the study for the strong Lefschetz property of a standard graded $k$-algebra defined by generic forms is closely related with the study for Fr\"{o}berg conjecture. There are various results on Lefschetz properties \cite{Ci,HMNW,HP,HW,MM}.

  In 2004, Wiebe \cite{Wi} showed that an Artinian ideal $R/I$ has the weak (resp.\ strong) Lefschetz property if and only if for the generic initial ideal $\gin(I)$ of $I$ with respect to the degree reverse lexicographic order, $R/\gin(I)$ has the weak (resp.\ strong) Lefschetz property. (For generic initial ideals, you can refer some literatures, e.g.\ given by Bayer and Stillman \cite{BS}, Conca \cite{C}, Eisenbud \cite{Ei}, Fl{\o}ystad \cite{Fl}, Galligo \cite{Ga}, Green \cite{Gr}, Cho et al.\ \cite{CCP}, or Cho and the author \cite{CP} for basic definition and theorems.) Using the result of Wiebe, Ahn et al.\ \cite{ACP} found an equivalent condition for $R/I$ to have the weak or strong Lefschetz properties in terms of the minimal system of generators of $\gin(I)$ in 2007. Cho and the author \cite{CP} showed that if $\gin(I)$ is almost reverse lexicographic(See Definition \ref{Def:ARL_Ideal}), then $R/I$ satisfies the strong Lefschetz property. They also showed that for an ideal $I$ generated by generic forms $F_1,\ldots,F_m$ if there is an ideal $K$ generated by homogeneous polynomials $G_1,\ldots,G_m$ with $\deg F_i = \deg G_i$ such that (1) $H(R/I,d) = H(R/K,d)$ for all $d$ and (2) $\gin(K)$ is almost reverse lexicographic, then $\gin(I)$ is also almost reverse lexicographic.

  This shows that the study about almost reverse lexicographic ideal is needed to solve Fr\"{o}berg conjecture.
  In Section 2, we study almost reverse lexicographic ideal $I$ in a polynomial ring $R$ over a field of arbitrary characteristic intensively. In particular, we give the minimal system of generators of $I$ definitely.

  In Section 3, we study Hilbert functions of almost reverse lexicographic ideals. Then we give a criterion for a given sequence $\HB$ of nonnegative integers to be induced from an almost reverse lexicographic ideal in a polynomial ring. In particular, we give an algorithm to obtain the almost reverse lexicographic ideal from a given sequence satisfying the condition.

  In the last section, we show that every Fr\"{o}berg sequence (See Definition \ref{Def:FRO_Seq}) is induced from an almost reverse lexicographic ideal in a polynomial ring.

  \vskip 2cm

\section{\sc Minimal System of Generators}

   Let $R=k[x_1, \ldots, x_n]$ be a polynomial ring over a field $k$ of arbitrary characteristic. As a term order on the set of the monomials in $R$, we use only the degree reverse lexicographic order, which is defined as follows: for monomials $M=x_1^{\alpha_1}\cdots x_n^{\alpha_n}$ and $N = x_1^{\beta_1} \cdots x_n^{\beta_n}$ in $R$, we say that $M > N$ if and only if either (1) $\deg M > \deg N$ or (2) $\deg M = \deg N$ and there is an integer $1 \le s \le n$ such that $\alpha_i = \beta_i$ for all $i \ge s$ but $\alpha_s < \beta_s$. (We note that if $M > N$, then we always exclude the possibility $M = N$.)

   A monomial ideal $I$ in $R$ is said to be strongly stable if for any monomial $M$ in $R$, $x_iM \in I$ implies $x_j M \in I$ whenever $1\le j < i$. Let $I$ be a strongly stable monomial ideal in $R$. We denote by $\G(I)$, or simply by $\G$, the minimal system of generators of $I$. For a monomial $M = x_1^{\alpha_1} \cdots x_n^{\alpha_n}$ in $R$, we set $\max M =\max \{ i | \alpha_i > 0 \}$. We will use multi-index notation, hence for any integer $s$ with $1 \le s\le n$, if $\alpha=(\alpha_1, \ldots, \alpha_s)$ is an element in $\ZZ^s_{\ge 0}$, then $x^\alpha$ means the monomial $x_1^{\alpha_1} \cdots x_{s}^{\alpha_s}$. Next we set $|\alpha| = \sum_{j=1}^{s} \alpha_j = \deg x^{\alpha}$. Then we extend the term order $<$ defined on the set of the monomials in $R$ to each set $\ZZ^s_{\ge 0}$ as follows: For $\alpha=(\alpha_1, \ldots, \alpha_s), \beta=(\beta_1, \ldots, \beta_s) \in \mathbb{Z}^s_{\ge 0}$, we define
   \[
       \alpha \le \beta \text{ if and only if } x^\alpha \le x^\beta.
   \]
   In particular, $\alpha \le \beta$ implies $|\alpha| \le |\beta|$.

   \begin{Defn}
      Suppose that $I$ is a strongly stable ideal in a polynomial ring $R=k[x_1,\ldots,x_n]$. By the last generator of $I$, denoted by $\MO(I)$ or simply by $\MO$, we mean the monomial in $\G$ satisfying the following two conditions:
      \begin{enumerate}
      \item $\deg \MO \ge \deg M$ for any $M \in \G$, and
      \item $\MO \le N$ for any $N \in \G$ with $\deg N = \deg \MO$.
      \end{enumerate}
   \end{Defn}

   Let $I$ be a strongly stable ideal in $R$. Suppose that $\mu = \max \MO$. To find out the minimal system $\G$ of generators of $I$, we define a positive integer $f_1$ and functions $f_i:\mathbb{Z}^{i-1}_{\ge 0} \longrightarrow \mathbb{Z}_{\ge 0} \cup \{ \infty \}$ for each $2 \le i \le \mu$ as follows:
   \begin{align}\label{Eqn:f's}
      \begin{split}
        f_1=&\min \{t \ | \, x^t_1 \in I \} \text{, and }\\
        f_i(\alpha_1, \ldots, \alpha_{i-1}) =& \min \{ t \ | \,
                  x^{\alpha_1}_1 \cdots x^{\alpha_{i-1}}_{i-1} x^{t}_i \in I \} \text{ for } (\alpha_1, \ldots, \alpha_{i-1}) \in \ZZ^{i-1}_{\ge 0}.
      \end{split}
   \end{align}

   \begin{Remk}\label{Remk:When_F_be_Zero}
       Let $s$ be an integer  with $1 \le s \le \mu-1$. For $\alpha=(\alpha_1, \cdots, \alpha_s) \in \ZZ^{s}_{\ge 0}$, note that $f_{s+1}(\alpha) = 0$ if and only if $x^{\alpha} \in I$.
   \end{Remk}

   To avoid endlessly repeating the hypotheses, we will assume throughout this article that the $f_i$'s are defined for a given strongly stable ideal $I$ as like {\rm(\ref{Eqn:f's})} where $\mu = \max \MO$. If there is a number of strongly stable ideals to be considered, then we use the notation $f^{I}_i$ for distinction. We also assume that $\mu \ge 2$ unless otherwise specified. Because if $\mu=1$, then $\G(I)=\{x_1^{f_1}\}$, and every theorems in this section are trivial.

   The following lemmas, introduced by Cho et al.\ \cite{CCP}, help us to determine the minimal system of generators of an almost reverse lexicographic ideal. They showed that if $I$ is an ideal in $R$ such that $R/I$ is a Cohen-Macaulay ring with $\dim R/I = r$, then the generic initial ideal $\gin(I)$ of $I$ is completely determined by the values $f_1$,$f_2,\ldots,f_{n-r}$. In particular, when they proved the following lemma, they used just two facts that (1) $x_{n-r}^t$ is contained in $\gin(I)$ for some $t > 0$ since $\dim R/I = r$, and that (2) $\gin(I)$ is strongly stable. Thus the proof given in that paper works in our case. (See the papers \cite{CCP,HS} for more information on the relations between dimension, depth, and the minimal systems of generators of strongly stable monomial ideals.)

   \begin{Lem}\cite{CCP}\label{Lem:Property_of_F's_Orig}
       Let $I$ be a strongly stable ideal in $R=k[x_1,\ldots,x_n]$. Suppose that the last generator of $I$ is $M_{\omega} = x^{\omega}$ for some $\omega=(\omega_1,\ldots,\omega_{\mu}) \in \ZZ^{\mu}_{\ge 0}$ with $\omega_{\mu} > 0$, and that $I$ contains the monomial $x_{\mu-1}^t$ for some $t > 0$. Let $1 \le i \le \mu-2$. For any $\alpha=(\alpha_1,\ldots,\alpha_{i}) \in \ZZ^{i}$ such that $0 \le \alpha_1 < f_1$ and $0 \le \alpha_{j} < f_{j}(\alpha_1, \ldots, \alpha_{j-1})$ whenever $2 \le j\le i$, we have
       \begin{enumerate}
          \item $0 < f_{i+1}(\alpha) < \infty$,
          \item $x^\alpha x^{f_{i+1}(\alpha)}_{i+1} = x_1^{\alpha_1} \cdots x_{i}^{\alpha_{i}} x^{f_{i+1}(\alpha)}_{i+1} \in \G$, and
          \item If $\alpha_j \ge 1$ for some $1 \le j \le i-1$, then
             \[
                 f_{i+1}(\alpha_1, \ldots, \alpha_j, \ldots, \alpha_{i}) + 1  \le
                 f_{i+1}(\alpha_1, \ldots, \alpha_j-1, \ldots, \alpha_{i}).
             \]
       \end{enumerate}
   \end{Lem}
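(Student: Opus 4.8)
The plan is to establish the three claims by induction on $i$, exploiting the strong stability of $I$ together with the minimality of the last generator $\MO = x^{\omega}$. The hypotheses on $\alpha = (\alpha_1,\dots,\alpha_i)$ say precisely that $x^{\alpha}$ is \emph{not} in $I$, since $\alpha_j < f_j(\alpha_1,\dots,\alpha_{j-1})$ means $x_1^{\alpha_1}\cdots x_j^{\alpha_j} \notin I$ for each $j \le i$; by Remark~\ref{Remk:When_F_be_Zero} this gives $f_{i+1}(\alpha) \neq 0$, which is one half of (1). The other half, $f_{i+1}(\alpha) < \infty$, is where I would use the two ingredients singled out in the paragraph preceding the lemma: $x_{\mu-1}^t \in I$ for some $t>0$ (here playing the role of the Cohen--Macaulay hypothesis in \cite{CCP}) and strong stability. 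Since $i+1 \le \mu-1$, I want to show some power of $x_{i+1}$ multiplied by $x^{\alpha}$ lands in $I$; starting from $x_{\mu-1}^t \in I$ and repeatedly applying strong stability to replace $x_{\mu-1}$ by $x_{i+1}$ (and inserting the lower-index factors $x_1^{\alpha_1},\dots,x_i^{\alpha_i}$, which only helps) one concludes $x^{\alpha} x_{i+1}^N \in I$ for $N$ large, so $f_{i+1}(\alpha)$ is finite.

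For claim (2), set $M = x^{\alpha} x_{i+1}^{f_{i+1}(\alpha)}$. By definition of $f_{i+1}$, $M \in I$ but $M/x_{i+1} \notin I$, so $M$ is not divisible into $I$ by lowering the $x_{i+1}$-exponent. To see $M \in \G$, i.e.\ $M$ is a minimal generator, I must also rule out $M/x_j \in I$ for $j \le i$. If $M/x_j \in I$ for some $j$ with $\alpha_j \ge 1$, strong stability (raising the index from $j$ to $i+1$) would give $(M/x_j)\cdot(x_{i+1}/x_{i+1})$-type manipulations forcing $x^{\alpha} x_{i+1}^{f_{i+1}(\alpha)-1} = M/x_{i+1} \in I$, a contradiction; one has to be slightly careful and instead move the factor $x_{i+1}$ down, or argue that $M/x_j \in I$ together with strong stability yields $x_1^{\alpha_1}\cdots x_j^{\alpha_j - 1}\cdots x_i^{\alpha_i}x_{i+1}^{f_{i+1}(\alpha)+1}$... the cleanest route is: $M/x_j \in I \Rightarrow$ (strong stability, $j < i+1$) $(M/x_j)x_{i+1}/x_{i+1}$ is not quite it; rather $M/x_j \in I$ and multiplying back is not allowed, so instead use that $x^{\alpha}/x_j \cdot x_{i+1}^{f_{i+1}(\alpha)} \in I$ would, by minimality of $f_{i+1}$ applied at the index $(\alpha_1,\dots,\alpha_j-1,\dots,\alpha_i)$, say $f_{i+1}(\dots,\alpha_j-1,\dots) \le f_{i+1}(\alpha)$, and then strong stability bumping $x_j$ back up contradicts $M/x_{i+1}\notin I$. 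This dovetails with claim (3).

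Claim (3) is really the crux and I expect it to be the main obstacle. Suppose $\alpha_j \ge 1$ and write $\beta = (\alpha_1,\dots,\alpha_j - 1,\dots,\alpha_i)$; I need $f_{i+1}(\beta) \ge f_{i+1}(\alpha) + 1$. The inequality $f_{i+1}(\beta) \ge f_{i+1}(\alpha)$ is immediate from strong stability: if $x^{\beta}x_{i+1}^{t} \in I$ then raising the index $j$ gives $x^{\alpha}x_{i+1}^t \in I$, hence $f_{i+1}(\alpha)\le t$, so $f_{i+1}(\alpha)\le f_{i+1}(\beta)$. To upgrade $\ge$ to a strict gap of at least one, I would argue by contradiction: if $f_{i+1}(\beta) = f_{i+1}(\alpha) =: e$, then $x^{\beta}x_{i+1}^{e} \in I$ but $x^{\beta}x_{i+1}^{e-1}\notin I$, and simultaneously $x^{\alpha}x_{i+1}^{e}\in I$ with $x^{\alpha}x_{i+1}^{e-1}\notin I$. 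Now consider the generator of $\G$ dividing $x^{\beta}x_{i+1}^{e}$; since lowering any of the first $i$ indices or the last stays inside the region where things are not in $I$ (using the hypotheses on $\alpha$ and that $\beta_j = \alpha_j - 1 < f_j$), this generator must be $x^{\beta}x_{i+1}^{e}$ itself, and likewise $x^{\alpha}x_{i+1}^{e}\in\G$. But then $x^{\alpha}x_{i+1}^{e} = x_j \cdot (x^{\beta}x_{i+1}^{e})$ exhibits one minimal generator as a multiple of another, which is absurd. Packaging the three parts, I would run the induction so that the inductive hypothesis on $(1)$ and $(2)$ for smaller $i$ feeds the "region" argument (the set of $\alpha$ with $x^{\alpha}\notin I$ is exactly cut out by the $f_j$'s), and throughout the only tools are: Remark~\ref{Remk:When_F_be_Zero}, strong stability in both directions (raising and the contrapositive for lowering), finiteness coming from $x_{\mu-1}^t \in I$, and the defining minimality of the $f_i$'s.
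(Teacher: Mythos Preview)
The paper does not actually prove this lemma: its entire proof reads ``See Lemma~3.2 in the paper \cite{CCP}.'' The paragraph preceding the statement explains that the argument in \cite{CCP} uses only (i) strong stability and (ii) the presence of some $x_{\mu-1}^t$ in $I$, which is why it carries over to the present setting. Your proposal uses precisely these two ingredients, so it is in line with what the paper has in mind.

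Your argument is correct. A couple of places can be tightened. For (2), after the false starts you do land on the right step: if $M/x_j = x^{\beta}x_{i+1}^{e}\in I$ with $e=f_{i+1}(\alpha)$, write it as $x_{i+1}\cdot(x^{\beta}x_{i+1}^{e-1})$ and apply strong stability with $j<i+1$ to get $x_j\cdot(x^{\beta}x_{i+1}^{e-1})=x^{\alpha}x_{i+1}^{e-1}\in I$, contradicting the definition of $f_{i+1}(\alpha)$. That single line replaces the exploratory paragraph. For (3), your contradiction via ``two minimal generators, one a multiple of the other'' works, but the same strong-stability move gives it directly: from $x^{\beta}x_{i+1}^{f_{i+1}(\beta)}\in I$ and $f_{i+1}(\beta)\ge 1$, strong stability yields $x^{\alpha}x_{i+1}^{f_{i+1}(\beta)-1}\in I$, hence $f_{i+1}(\alpha)\le f_{i+1}(\beta)-1$. (To know $f_{i+1}(\beta)\ge 1$ you need that $\beta$ also satisfies the hypotheses; this follows because $x^{\beta}\notin I$, as $x^{\alpha}=x_j\cdot x^{\beta}$ and $x^{\alpha}\notin I$.) No induction on $i$ is really needed once you phrase it this way.
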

   \begin{proof}
     See Lemma 3.2 in the paper \cite{CCP}.
   \end{proof}

   Although the following lemma is also given in the paper \cite{CCP}, we give its proof here, because they used the hypothesis $R/I$ is Cohen-Macaulay to assure that there is no minimal generator $M$ of $I$ with $\max M > n-r$ under the same assumption as above.

   \begin{Lem}\cite{CCP}\label{Lem:MinGen_Basic}
     Let $I$ be a strongly stable ideal in $R=k[x_1,\ldots,x_n]$. Suppose that the last generator of $I$ is $M_{\omega} = x^{\omega}$ for some $\omega=(\omega_1,\ldots,\omega_{\mu}) \in \ZZ^{\mu}_{\ge 0}$ with $\omega_{\mu} > 0$. If $I$ contains the monomial $x_{\mu-1}^t$ for some $t > 0$, then
     \begin{align*}
         \{M \in \G|\max M \le & \mu-1\} \\
             &= \{x_1^{f_1}\} \cup
               \bigcup_{i=1}^{\mu-2}
                \left\{x^{\alpha}x_{i+1}^{f_{i+1}(\alpha)}
                \left| \begin{array}{l}
                         \alpha=(\alpha_1,\ldots,\alpha_{i}) \in \ZZ^{i}_{\ge 0},\\
                         0 \le \alpha_1 < f_1, \text{ and }\\
                         0 \le \alpha_{j} < f_{j}(\alpha_1, \ldots, \alpha_{j-1})\\
                         \text{for each} \ 2 \le j \le i
                       \end{array}
                \right.
                \right\}.
     \end{align*}
   \end{Lem}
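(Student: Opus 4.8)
The plan is to establish the asserted equality by proving the two inclusions separately, viewing the right-hand side as an explicit list of monomials and the left-hand side as the set of minimal generators of $I$ of support at most $\mu-1$.

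For the inclusion $\supseteq$, I would check that each monomial on the right lies in $\G$ and has $\max\le\mu-1$. The monomial $x_1^{f_1}$ lies in $I$ by definition of $f_1$, and its only proper divisors are the powers $x_1^s$ with $s<f_1$, none of which lie in $I$ by minimality of $f_1$; hence $x_1^{f_1}\in\G$, and $\max x_1^{f_1}=1\le\mu-1$ since $\mu\ge 2$. For a monomial $x^\alpha x_{i+1}^{f_{i+1}(\alpha)}$ with $1\le i\le\mu-2$ and $\alpha=(\alpha_1,\dots,\alpha_i)$ subject to the displayed inequalities, I would quote Lemma \ref{Lem:Property_of_F's_Orig}, whose hypotheses are exactly met here --- this is the one place the assumption that $x_{\mu-1}^t\in I$ for some $t>0$ is used: part (1) gives $0<f_{i+1}(\alpha)<\infty$, so $\max\bigl(x^\alpha x_{i+1}^{f_{i+1}(\alpha)}\bigr)=i+1\le\mu-1$, and part (2) gives $x^\alpha x_{i+1}^{f_{i+1}(\alpha)}\in\G$. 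Thus this inclusion is immediate from the preceding lemma.

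For the inclusion $\subseteq$, I would take $M\in\G$ with $\max M=i\le\mu-1$ and write $M=x^\beta$ with $\beta=(\beta_1,\dots,\beta_i)$ and $\beta_i>0$. If $i=1$, then $M=x_1^{\beta_1}\in I$ forces $\beta_1\ge f_1$, while minimality of $M$ rules out the proper divisor $x_1^{f_1}\in I$ unless $\beta_1=f_1$, so $M=x_1^{f_1}$. If $2\le i\le\mu-1$, put $\alpha=(\beta_1,\dots,\beta_{i-1})$. From $M=x^\alpha x_i^{\beta_i}\in I$ and the definition of $f_i$ we get $f_i(\alpha)\le\beta_i$; were the inequality strict, $x^\alpha x_i^{f_i(\alpha)}$ would be a proper divisor of $M$ lying in $I$, contradicting $M\in\G$, so $\beta_i=f_i(\alpha)$ and $M=x^\alpha x_i^{f_i(\alpha)}$. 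To see that $\alpha$ satisfies the required constraints, I would show $\beta_j<f_j(\beta_1,\dots,\beta_{j-1})$ for every $1\le j\le i-1$ (reading $f_1$ for the constant when $j=1$): otherwise $\beta_j\ge f_j(\beta_1,\dots,\beta_{j-1})$, which in particular forces $f_j(\beta_1,\dots,\beta_{j-1})<\infty$, and then $x_1^{\beta_1}\cdots x_{j-1}^{\beta_{j-1}}x_j^{f_j(\beta_1,\dots,\beta_{j-1})}$ --- a monomial in $I$, the case $f_j=0$ being covered by Remark \ref{Remk:When_F_be_Zero} --- divides $M$, and properly so because its degree is at most $\beta_1+\cdots+\beta_j<\beta_1+\cdots+\beta_i=\deg M$ (strictness coming from $i>j$ and $\beta_i>0$); this contradicts $M\in\G$. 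Hence $\beta_1<f_1$ and $0\le\beta_j<f_j(\beta_1,\dots,\beta_{j-1})$ for $2\le j\le i-1$, which places $M$ in the $(i-1)$-st member of the union on the right.

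I do not expect a serious obstacle here; the care needed is essentially bookkeeping: separating the $i=1$ case, ruling out the values $0$ and $\infty$ for the relevant $f_j$ inside the proper-divisor estimates, and invoking the hypothesis $x_{\mu-1}^t\in I$ precisely where Lemma \ref{Lem:Property_of_F's_Orig} is applied. The only genuine difference from the version in \cite{CCP} is that here nothing is asserted about generators $M$ with $\max M\ge\mu$ --- indeed $\MO$ is one such --- so the Cohen-Macaulay hypothesis used in that paper is unnecessary for this restricted claim.
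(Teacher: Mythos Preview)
Your proposal is correct and follows essentially the same route as the paper: the $\supseteq$ inclusion is dispatched by Lemma~\ref{Lem:Property_of_F's_Orig}, and the $\subseteq$ inclusion is handled by writing a minimal generator as $x^{\alpha}x_i^{\beta_i}$, forcing $\beta_i=f_i(\alpha)$ by minimality, and showing each earlier exponent $\beta_j$ is below the corresponding $f_j$ bound because otherwise a proper initial segment of $M$ would already lie in $I$. Your version is slightly more explicit about edge cases (the $i=1$ case, and the possibilities $f_j=0$ or $\infty$), but there is no substantive difference in strategy.
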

   \begin{proof}
     By Lemma \ref{Lem:Property_of_F's_Orig}, it is enough to show that the set in the left hand side is a subset of the one in the right. Let $M$ be a minimal generator of $I$. If $\max M = 1$, then $M=x_1^{f_1}$ by the definition of $f_1$. So we may assume that $\max M \ge 2$. Suppose that $M=x^{\alpha}x_{i+1}^{\beta}$ for some $\alpha=(\alpha_1,\ldots,\alpha_{i}) \in \ZZ^{i}_{\ge 0}$ with $1 \le i \le \mu-2$ and $\beta > 0$. We claim that $\alpha_j < f_{j}(\alpha_1,\ldots,\alpha_{j-1})$ for any $1 \le j \le i$. If our claim is true, then $\beta \ge f_{i+1}(\alpha)$ by the definition of $f_{i+1}$. Since $M$ is a minimal generator, it also follows that $\beta \le f_{i+1}(\alpha)$. Thus $M$ is contained in the right hand side set.

     To prove the claim, suppose that there is an integer $j$ with $1 \le j \le i$ such that $f_j(\alpha_1,\ldots,\alpha_{j-1}) \ge \alpha_{j}$, then $x_1^{\alpha_1}\cdots x_{j}^{\alpha_j}$ belongs to $I$ by the definition of $f_j$. This contradicts that $M$ is a minimal generator of $I$. So we are done.
   \end{proof}

   Let $I$ be a strongly stable ideal in $R=k[x_1,\ldots,x_n]$. Suppose that the last generator of $I$ is $M_{\omega} = x^{\omega}$ for some $\omega=(\omega_1,\ldots,\omega_{\mu}) \in \ZZ^{\mu}_{\ge 0}$ with $\omega_{\mu} > 0$, and that $I$ contains the monomial $x_{\mu-1}^t$ for some $t > 0$. For each $1 \le i \le \mu-2$, we define
   \begin{align}\label{Set:II's}
      \begin{split}
       \II_i &= \left\{(\alpha_1,\ldots,\alpha_{i}) \in \ZZ^{i}_{\ge 0}
                \left| \begin{array}{l}
                         0 \le \alpha_1 < f_1, \ \text{ and } \\
                         0 \le \alpha_{j} < f_{j}(\alpha_1, \ldots, \alpha_{j-1}) \\
                         \text{for each} \ 2 \le j \le i
                       \end{array}
                \right.
               \right\}, \text{ and let} \\
       \II_{\mu-1} &= \left\{(\alpha_1,\ldots,\alpha_{\mu-1}) \in \ZZ^{\mu-1}_{\ge 0}
                 \left| \begin{array}{l}
                         0 \le \alpha_1 < f_1, \\
                         0 \le \alpha_{j} < f_{j}(\alpha_1, \ldots, \alpha_{j-1}) \\
                         \text{for each} \ 2 \le j \le \mu-1,  \ \text{and} \\
                         (\alpha_1,\ldots,\alpha_{\mu-1}) \ge (\omega_1,\ldots,\omega_{\mu-1})
                       \end{array}
                \right.  \right\}.
      \end{split}
   \end{align}

   As like the $f_i$'s, we maintain the assumption that the $\II_i$'s are defined for each strongly stable ideal throughout this paper. If we want to distinguish the $\II_i$'s for two ideals $I$ and $J$, then we use the notations $\II^{I}_i$ and $\II^{J}_i$, respectively.

   The following lemma gives an upper bound for number of elements of the set $\{ M \in \G(I) \, | \, \max M = \max \MO\}$. It will be used in the main theorem of next section(See Theorem \ref{Thm:UAET_Imp_ARL_IDEAL}). For a set $S$, we denote by $|S|$ the number of elements in the set $S$.

   \begin{Lem}\label{Lem:NumberOflastGens}
      Let $I$ be a strongly stable ideal in $R=k[x_1,\ldots,x_n]$. Suppose that the last generator of $I$ is $M_{\omega} = x^{\omega}$ for some $\omega=(\omega_1,\ldots,\omega_{\mu}) \in \ZZ^{\mu}_{\ge 0}$ with $\omega_{\mu} > 0$, and that $I$ contains the monomial $x_{\mu-1}^t$ for some $t > 0$. Then we have
      \[
          |\II_{\mu-1}| \le \sum_{\alpha \in \II_{\mu-2}} f_{\mu-1}(\alpha).
      \]
      The equality holds if and only if $\MO = x_{\mu}^{\omega_{\mu}}$, i.e.\ $\omega=(0,\ldots,0,\omega_{\mu})$. In particular, the set $\II_{\mu-1}$ is a finite set.
   \end{Lem}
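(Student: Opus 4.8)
\emph{Proof sketch.} The plan is to compare $\II_{\mu-1}$ with the auxiliary set $\II_{\mu-1}'$ consisting of all $(\alpha_1,\ldots,\alpha_{\mu-1}) \in \ZZ^{\mu-1}_{\ge 0}$ that satisfy the inequality constraints $0 \le \alpha_1 < f_1$ and $0 \le \alpha_j < f_j(\alpha_1,\ldots,\alpha_{j-1})$ for $2 \le j \le \mu-1$, but not necessarily the condition $(\alpha_1,\ldots,\alpha_{\mu-1}) \ge (\omega_1,\ldots,\omega_{\mu-1})$; thus $\II_{\mu-1} \subseteq \II_{\mu-1}'$. First I would count $\II_{\mu-1}'$ via the truncation map $(\alpha_1,\ldots,\alpha_{\mu-1}) \mapsto (\alpha_1,\ldots,\alpha_{\mu-2})$: it is immediate from the definitions that this map carries $\II_{\mu-1}'$ onto $\II_{\mu-2}$, and that the fiber over a given $\alpha \in \II_{\mu-2}$ is exactly $\{(\alpha, t) : 0 \le t < f_{\mu-1}(\alpha)\}$. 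By Lemma \ref{Lem:Property_of_F's_Orig}(1), applied with $i = \mu-2$ (this is the only place the hypothesis $x_{\mu-1}^{t} \in I$ enters), the value $f_{\mu-1}(\alpha)$ is a finite positive integer, so this fiber has exactly $f_{\mu-1}(\alpha)$ elements; summing over $\alpha$ gives $|\II_{\mu-1}'| = \sum_{\alpha \in \II_{\mu-2}} f_{\mu-1}(\alpha)$. Applying the same lemma inductively to $\II_1, \II_2, \ldots, \II_{\mu-2}$ shows each of these is finite, hence $\II_{\mu-1}'$ is finite, and therefore so is $\II_{\mu-1}$; and the inequality $|\II_{\mu-1}| \le \sum_{\alpha \in \II_{\mu-2}} f_{\mu-1}(\alpha)$ follows at once from $\II_{\mu-1} \subseteq \II_{\mu-1}'$.

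For the equality statement, since $\II_{\mu-1} \subseteq \II_{\mu-1}'$ and both sets are finite, $|\II_{\mu-1}| = |\II_{\mu-1}'|$ holds if and only if $\II_{\mu-1} = \II_{\mu-1}'$, i.e.\ if and only if every tuple in $\II_{\mu-1}'$ already satisfies $(\alpha_1,\ldots,\alpha_{\mu-1}) \ge (\omega_1,\ldots,\omega_{\mu-1})$. The crucial observation will be that the zero tuple $(0,\ldots,0)$ belongs to $\II_{\mu-1}'$: since $\MO = x^{\omega}$ with $\omega_\mu > 0$ is a minimal generator, $I$ is a proper ideal, so $1 \notin I$, whence $f_1 \ge 1$ and $f_j(0,\ldots,0) = \min\{t \mid x_j^t \in I\} \ge 1$ for every $j$. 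On the other hand, in the degree reverse lexicographic order on $\ZZ^{\mu-1}_{\ge 0}$ the zero tuple is the unique minimum, because every nonzero tuple has strictly positive degree. Hence $\II_{\mu-1} = \II_{\mu-1}'$ forces $(\omega_1,\ldots,\omega_{\mu-1}) = (0,\ldots,0)$, that is, $\MO = x_\mu^{\omega_\mu}$. Conversely, if $\MO = x_\mu^{\omega_\mu}$ then $(\omega_1,\ldots,\omega_{\mu-1}) = (0,\ldots,0)$ and the condition $(\alpha_1,\ldots,\alpha_{\mu-1}) \ge (0,\ldots,0)$ is automatic, so $\II_{\mu-1} = \II_{\mu-1}'$ and equality holds.

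Finally the boundary case $\mu = 2$ should be handled separately, under the natural convention $\II_0 = \{()\}$ and $f_1(()) = f_1$: then $\II_1 = \{\alpha_1 \mid \omega_1 \le \alpha_1 < f_1\}$, so $|\II_1| = f_1 - \omega_1 \le f_1 = \sum_{\alpha \in \II_0} f_1(\alpha)$, with equality precisely when $\omega_1 = 0$. I do not anticipate a genuine obstacle: the whole argument is a short counting argument once the finiteness and positivity of the $f_{\mu-1}(\alpha)$ for $\alpha \in \II_{\mu-2}$ are in hand, and these are exactly what Lemma \ref{Lem:Property_of_F's_Orig}(1) supplies from the standing hypothesis $x_{\mu-1}^{t} \in I$. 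The only points that need a little care are keeping the index ranges within the scope of that lemma (so that it is invoked only for $1 \le i \le \mu-2$) and the degenerate case $\mu = 2$.
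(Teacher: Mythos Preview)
Your proof is correct and follows essentially the same approach as the paper: your auxiliary set $\II_{\mu-1}'$ is exactly the set $T$ the paper introduces, and both arguments establish the inequality via $\II_{\mu-1}\subseteq T$, count $|T|$ by fibering over $\II_{\mu-2}$, and characterize equality by observing that the zero tuple lies in $T$ and is the minimum element. The only cosmetic differences are that the paper obtains finiteness of the $\II_i$ ($i\le\mu-2$) via the bijection with $\{M\in\G\mid\max M=i+1\}$ rather than by induction, and it does not separate out the $\mu=2$ case explicitly.
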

   \begin{proof}
   We let
   \[
       T =   \left\{(\alpha_1,\ldots,\alpha_{\mu-2},t) \in \ZZ^{\mu-1}_{\ge 0}
                  \left| \begin{array}{l}
                         \alpha = (\alpha_1,\ldots,\alpha_{\mu-2}) \in \II_{\mu-2}, \\
                         0 \le t < f_{\mu-1}(\alpha)
                       \end{array}
                \right.  \right\}.
   \]
   Then it suffices to show $\II_{\mu-1} \subset T$. But that is clear from the definition of $\II_{\mu-1}$.
   Indeed, if $\alpha=(\alpha_1,\ldots,\alpha_{\mu-1})$ is an element in $\II_{\mu-1}$, then the $(\mu-2)$-tuple $\AT := (\alpha_1,\ldots,\alpha_{\mu-2})$ belongs to $\II_{\mu-2}$ because $\alpha_i < f_{i}(\alpha_1,\ldots,\alpha_{i-1})$ for each $i$. Furthermore since $\alpha_{\mu-1} < f_{\mu-1}(\AT)$, we have $\alpha \in T$.

   Note that the equality holds if and only if $\II_{\mu-1} = T$. This occurs if and only if any element $(\alpha_1,\ldots,\alpha_{\mu-1})$ in $T$ is greater than or equal to $\OT=(\omega_1,\ldots,\omega_{\mu-1})$ by the definition of $\II_{\mu-1}$. Since $f_{\mu-1}(0) > 0$, $(0,\ldots,0) \in \ZZ^{\mu-1}$ is also contained in $T$. This implies that $\OT$ must be $(0,\ldots,0)$. So we are done.

   We note that $|\G(I)| < \infty$. This implies that $|\II_{i}| < \infty$ for all $1 \le i \le \mu-2$, because there is a one to one correspondence between $\II_i$ and the set $\{M \in \G(I) | \max M = i+1\}$ via the map $\alpha \mapsto x^{\alpha}x_{i+1}^{f_{i+1}(\alpha)}$ for each $1 \le i < \mu-1$. Hence the last statement follows from Lemma \ref{Lem:Property_of_F's_Orig} (1).
   \end{proof}

   \begin{Remk}
     Under the same assumption in Lemma \ref{Lem:NumberOflastGens}, the set $\II_{\mu-1}$ is a finite set. Furthermore $f_{\mu}(\alpha) > 0$ for any $\alpha \in \II_{\mu-1}$ by Remark \ref{Remk:When_F_be_Zero}. But as shown in the following example, we can have $f_{\mu}(\alpha)=\infty$.
   \end{Remk}

   \begin{Ex}
     \begin{enumerate}
       \item Let $I$ be the ideal in $R=k[x,y,z]$ generated by the set
           \[
              \G = \left\{
                          \begin{array}{cccc}
                              x^3,& x^2y,  & xy^3,   & y^5, \\
                                  & x^2z^5,& xy^2z^3,& y^4z^2
                              \end{array}
                   \right\}.
           \]
           Then one can see that $I$ is almost reverse lexicographic. Note that $\MO = x^2z^5$, $\II_1 = \{2,1,0\}$, and $\II_2 = \{(0,4),(1,2),(0,3),(2,0)\}$. But we have $f_3(0,3) = \min \{d|y^3z^d \in I \} = \infty$.
       \item If $I$ doesn't contain $x_{\mu-1}^t$ for some $t>0$, then the set $\II_{\mu-1}$ is not a finite set. Consider the ideal $I$ generated by $\{x^2, xy^2, xyz^2 \}$.  Then the ideal $I$ is strongly stable, and $\MO = xyz^2$. Note that $\II_1=\{1,0\}$ and $\OT = (1,1) \in \II_2$. By the definition of $\II_2$, any $2$-tuple $(0,b)$ is contained in $\II_2$ if $b < f_2(0)$ and $(0,b) > \OT$. Since $f_2(0) = \infty$, $(0,b) \in \II_2$ for any $b \ge 3$. Hence $\II_2$ is an infinite set.
     \end{enumerate}
   \end{Ex}

   \begin{Defn}\label{Def:ARL_Ideal}
      A monomial ideal $I$ in a polynomial ring $R=k[x_1,\ldots,x_n]$ is said to be almost reverse lexicographic if for any monomial $M$ in $R$ and any minimal generator $N$ of $I$ with $\deg N = \deg M$, $I$ contains $M$ whenever $M > N$.
   \end{Defn}

   Suppose that $I$ is an almost reverse lexicographic ideal in $R$. In this section we give the minimal system $\G$ of generators of $I$ in a close form. As a first step, we claim that $I$ is strongly stable. Indeed, for a monomial $M$ in $I$, suppose that $x_i$ divides $M$ and $j < i$. Then there is $N \in \G$ such that $N$ divides $M$. If $x_i$ cannot divide $N$, then $N$ divides $M/x_i$, so $x_j(M/x_i)$ is contained in $I$. On the other hand, if $x_i$ divides $N$, then $x_j(N/x_i) > N$. Since $I$ is almost reverse lexicographic, $I$ contains $x_j(N/x_i)$, and hence also contains $x_j(M/x_i)$ .

   Let $\MO= x^{\omega}$ be the last generator of $I$ for some $\omega=(\omega_1,\ldots,\omega_{\mu})$ with $\omega_{\mu}>0$. First we note that $x_{\mu-1}^{|\omega|} > \MO$. Since $I$ is almost reverse lexicographic, $x_{\mu-1}^{|\omega|}$ is contained in $I$. Hence we have the finite set $\II_i$ for each $1 \le i \le \mu-1$. Now we can have the following lemma similar to Lemma \ref{Lem:Property_of_F's_Orig}, which gives the minimal system of generators of $I$.

   \begin{Lem} \label{Lem:Property_Of_F's_Ext}
       Let $I$ be an almost reverse lexicographic ideal in a polynomial ring $R=k[x_1,\ldots,x_n]$. Suppose that the last generator of $I$ is $\MO= x^{\omega}$ for some $\omega=(\omega_1,\ldots,\omega_{\mu})$ with $\omega_{\mu}>0$. Let $1\le i \le \mu-1$.
       For any $\alpha=(\alpha_1,\ldots,\alpha_{i}) \in \II_{i}$ we have
       \begin{enumerate}
         \item $0<f_{i+1}(\alpha) <\infty$,
         \item $x^\alpha x^{f_{i+1}(\alpha)}_{i+1} = x_1^{\alpha_1} \cdots x_{i}^{\alpha_{i}} x^{f_{i+1}(\alpha)}_{i+1} \in \G$, and
         \item If $\alpha_j \ge 1$ for some $1 \le j \le i$, then
           \[
               f_{i+1}(\alpha_1, \ldots, \alpha_j, \ldots, \alpha_{i}) + 1\le
               f_{i+1}(\alpha_1, \ldots, \alpha_j-1, \ldots, \alpha_{i}).
           \]
       \end{enumerate}
   \end{Lem}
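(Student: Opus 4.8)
The plan is to reduce as much as possible to Lemma~\ref{Lem:Property_of_F's_Orig}, and then to handle by hand the genuinely new features: the index $i=\mu-1$ in all three parts, and the case $j=i$ in part~(3). Throughout I would use the two facts established just before the statement --- an almost reverse lexicographic ideal is strongly stable, and $x_{\mu-1}^{|\omega|}\in I$, so in particular every $\II_i$ is finite. I would begin with part~(3), which should come out uniformly for all $1\le i\le\mu-1$ and all $1\le j\le i$ with $\alpha_j\ge 1$, with no appeal to (1) or (2): since $\alpha\in\II_i$ the monomial $x^{\alpha}$ is not in $I$ (because $\alpha_i<f_i(\alpha_1,\ldots,\alpha_{i-1})$, or $\alpha_1<f_1$ when $i=1$), hence neither is its divisor $x^{(\alpha_1,\ldots,\alpha_j-1,\ldots,\alpha_i)}$, so by Remark~\ref{Remk:When_F_be_Zero} the number $\gamma:=f_{i+1}(\alpha_1,\ldots,\alpha_j-1,\ldots,\alpha_i)$ is at least $1$; if $\gamma=\infty$ the inequality is trivial, and otherwise $x^{(\alpha_1,\ldots,\alpha_j-1,\ldots,\alpha_i)}x_{i+1}^{\gamma}\in I$, so pushing one power of $x_{i+1}$ down onto $x_j$ by strong stability (legitimate as $j<i+1$) gives $x^{\alpha}x_{i+1}^{\gamma-1}\in I$ and hence $f_{i+1}(\alpha)\le\gamma-1$.

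For parts~(1) and~(2) with $1\le i\le\mu-2$ there is nothing to do: Lemma~\ref{Lem:Property_of_F's_Orig} applies verbatim, with $x_{\mu-1}^{|\omega|}\in I$ playing the role of the monomial $x_{\mu-1}^{t}$. So I would concentrate on $i=\mu-1$, let $\alpha\in\II_{\mu-1}$, and argue part~(1) first. Positivity $f_{\mu}(\alpha)>0$ again follows from Remark~\ref{Remk:When_F_be_Zero} since $x^{\alpha}\notin I$. For finiteness the crucial estimate is $|\alpha|<|\omega|$: the monomial $x_1^{\alpha_1}\cdots x_{\mu-2}^{\alpha_{\mu-2}}x_{\mu-1}^{f_{\mu-1}(\alpha_1,\ldots,\alpha_{\mu-2})}$ is a minimal generator of $I$ --- by Lemma~\ref{Lem:Property_of_F's_Orig}(2) when $\mu\ge 3$, and it is simply $x_1^{f_1}$ when $\mu=2$ --- hence has degree at most $\deg\MO=|\omega|$, and since $\alpha_{\mu-1}<f_{\mu-1}(\alpha_1,\ldots,\alpha_{\mu-2})$ this forces $|\alpha|<|\omega|$.

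Now $x^{\alpha}x_{\mu}^{|\omega|-|\alpha|}$ has degree exactly $|\omega|$, and I would compare it with $\MO=x^{\OT}x_{\mu}^{\omega_{\mu}}$ in the degree reverse lexicographic order: both have the same degree, and $|\alpha|\ge|\OT|$ because $\alpha\ge\OT$ in $\ZZ^{\mu-1}_{\ge 0}$, so the exponent $|\omega|-|\alpha|$ of $x_{\mu}$ in the former is at most $\omega_{\mu}=|\omega|-|\OT|$, strictly less unless $|\alpha|=|\OT|$, in which case the comparison is settled lower down by $x^{\alpha}\ge x^{\OT}$. In every case $x^{\alpha}x_{\mu}^{|\omega|-|\alpha|}\ge\MO$, so it lies in $I$: either it equals $\MO$, or it is strictly larger and of the same degree as the minimal generator $\MO$, so the almost reverse lexicographic property applies. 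Thus $f_{\mu}(\alpha)\le|\omega|-|\alpha|<\infty$, which is part~(1) for $i=\mu-1$.

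Part~(2) for $i=\mu-1$ then follows quickly: $x^{\alpha}x_{\mu}^{f_{\mu}(\alpha)}\in I$ by definition, and it is a minimal generator because dividing by $x_{\mu}$ leaves $x^{\alpha}x_{\mu}^{f_{\mu}(\alpha)-1}\notin I$ by minimality of $f_{\mu}(\alpha)$, while dividing by any $x_j$ with $j\le\mu-1$ and $\alpha_j\ge1$ would place $x^{(\alpha_1,\ldots,\alpha_j-1,\ldots,\alpha_{\mu-1})}x_{\mu}^{f_{\mu}(\alpha)}$ in $I$, forcing $f_{\mu}(\alpha_1,\ldots,\alpha_j-1,\ldots,\alpha_{\mu-1})\le f_{\mu}(\alpha)$ and contradicting the instance of part~(3) already proved. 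The step I expect to demand the most care is the degree reverse lexicographic comparison inside part~(1): one must bound $|\alpha|$ correctly, separating off the degenerate case $\mu=2$, and then track the exponent of $x_{\mu}$ against the exponents of the lower variables so as to be sure the resulting monomial is $\ge\MO$ --- only then is one entitled to invoke the almost reverse lexicographic hypothesis, which constrains only monomials of the same degree as a minimal generator.
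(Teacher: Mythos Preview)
Your proposal is correct and follows the paper's strategy: reduce to Lemma~\ref{Lem:Property_of_F's_Orig} for $i\le\mu-2$, and for $i=\mu-1$ show $|\alpha|<|\omega|$, then compare $x^{\alpha}x_{\mu}^{|\omega|-|\alpha|}$ with $\MO$ in the degree reverse lexicographic order to invoke the almost reverse lexicographic hypothesis. The only differences are cosmetic --- you obtain $|\alpha|<|\omega|$ via the degree bound on the minimal generator $x^{(\alpha_1,\ldots,\alpha_{\mu-2})}x_{\mu-1}^{f_{\mu-1}(\alpha_1,\ldots,\alpha_{\mu-2})}$ rather than, as the paper does, via $x_{\mu-1}^{|\omega|}\in I$ together with strong stability, and you spell out parts~(2) and~(3) (including the case $j=i$, which Lemma~\ref{Lem:Property_of_F's_Orig}(3) does not cover) more explicitly than the paper's one-line appeals to strong stability.
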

   \begin{proof}
     As shown in the discussion preceding this lemma, $I$ is strongly stable, and $x_{\mu-1}^{|\omega|} \in I$. Hence the assertions follow if $1 \le i \le \mu-2$ by Lemma \ref{Lem:Property_of_F's_Orig}. Thus it suffices to prove for only the case $i=\mu-1$. Then $\alpha=(\alpha_1,\ldots,\alpha_{\mu-1}) \in \II_{\mu-1}$, and hence $\alpha \ge (\omega_1,\ldots,\omega_{\mu-1})$.

     By the choice of the $\alpha_i$'s, we have $x_1^{\alpha_1} \cdots x_{\mu-1}^{\alpha_{\mu-1}} \notin I$. It follows that $f_{\mu}(\alpha) > 0$ by Remark \ref{Remk:When_F_be_Zero}. Recall that $x_{\mu-1}^{|\omega|} \in I$. Since $I$ is strongly stable, if $|\alpha| \ge |\omega| = \deg \MO$, then $x_1^{\alpha_1} \cdots x_{\mu-1}^{\alpha_{\mu-1}} \in I$, a contradiction. This shows that $|\alpha| < |\omega|$. Set $M=x_1^{\alpha_1} \cdots x_{\mu-1}^{\alpha_{\mu-1}} x_{\mu}^{|\omega| - |\alpha|}$, then $\deg M = |\omega| = \deg \MO$. We claim that $M \ge \MO$. If our claim is true, then $I$ contains the monomial $M$, since $I$ is almost reverse lexicographic.  Therefore $f_{\mu}(\alpha) \le |\omega| - |\alpha| < \infty$.

     Since $\alpha > (\omega_1,\ldots,\omega_{\mu-1})$, $|\alpha| \ge |(\omega_1,\ldots,\omega_{\mu-1})| = |\omega|-\omega_\mu$, or equivalently, $|\omega| - |\alpha| \le \omega_{\mu}$. If $|\omega| - |\alpha| < \omega_{\mu}$, then $M \ge \MO$ clearly. So we may assume that $|\omega| - |\alpha| = \omega_{\mu}$. But then $|(\alpha_1,\ldots,\alpha_{\mu-1})| = |(\omega_1,\ldots,\omega_{\mu-1})|$, and hence $(\alpha_1,\ldots,\alpha_{\mu-1}) > (\omega_1,\ldots,\omega_{\mu-1})$. It follows $M \ge \MO$, so we are done.

     (2) follows from the choice of the $\alpha_i$'s and the definition of $f_{i+1}$.

     (3) is clear from the definition of $f_{i+1}$ and the strongly stableness of $I$.
   \end{proof}

   \begin{Prop}\label{Prop:MinGenOfARL}
      Let $I$ be an almost reverse lexicographic ideal in a polynomial ring $R=k[x_1,\ldots,x_n]$. Then the minimal system $\G$ of generator of $I$ is
      \begin{equation} \label{Set:MGS}
          \G = \{x_1^{f_1}\} \cup \bigcup_{i=1}^{\mu-1}
               \{
                  x^{\alpha_1}_1\cdots x^{\alpha_{i}}_{i} x^{f_{i+1}(\alpha)}_{i+1} = x^{\alpha}x^{f_{i+1}(\alpha)}_{i+1} \ | \
                  \alpha = (\alpha_1, \ldots, \alpha_{i}) \in \II_i
               \},
   \end{equation}
   where the last generator of $I$ is $\MO = x_1^{\omega_1} \cdots x_{\mu}^{\omega_{\mu}}$ for some $(\omega_1,\ldots, \omega_{\mu}) \in \mathbb{Z}^{\mu}_{\ge 0}$ with $\omega_{\mu} > 0$.
   \end{Prop}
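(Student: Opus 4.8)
The plan is to prove the two inclusions separately, the containment $\supseteq$ being quick and essentially all the content sitting in $\subseteq$. For $\supseteq$: since $x_{\mu-1}^{|\omega|}\in I$ (established just before Lemma~\ref{Lem:Property_Of_F's_Ext}), strong stability gives $x_1^{|\omega|}\in I$, so $f_1$ is finite and $x_1^{f_1}$ is a minimal generator; and for $\alpha\in\II_i$ with $1\le i\le\mu-1$, parts (1) and (2) of Lemma~\ref{Lem:Property_Of_F's_Ext} say precisely that $x^{\alpha}x_{i+1}^{f_{i+1}(\alpha)}$ belongs to $\G$.

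For $\subseteq$, let $M\in\G$. If $\max M=1$ then $M=x_1^{f_1}$; if $2\le\max M\le\mu-1$ then $M$ lies in the right-hand set by Lemma~\ref{Lem:MinGen_Basic} (applicable since $x_{\mu-1}^{|\omega|}\in I$). So only $\max M\ge\mu$ remains. Repeating the argument in the proof of Lemma~\ref{Lem:MinGen_Basic} with $i=\mu-1$: when $\max M=\mu$ one writes $M=x^{\alpha}x_{\mu}^{\beta}$ with $\alpha=(\alpha_1,\dots,\alpha_{\mu-1})$ and $\beta\ge1$, and minimality yields $\alpha_j<f_j(\alpha_1,\dots,\alpha_{j-1})$ for every $1\le j\le\mu-1$ and then $\beta=f_\mu(\alpha)$. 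Hence the proof reduces to two claims: \textbf{(a)} if $\max M=\mu$ then $\alpha\ge\omega':=(\omega_1,\dots,\omega_{\mu-1})$, so that $\alpha\in\II_{\mu-1}$ and $M$ sits in the right-hand set; and \textbf{(b)} $\max M>\mu$ cannot occur.

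I would prove (a) and (b) by splitting on $d:=\deg M$, recalling $d\le|\omega|$ by the definition of $\MO$. If $d=|\omega|$, condition (2) in the definition of $\MO$ gives $\MO\le M$, and both claims follow by comparing exponent vectors in the degree reverse lexicographic order: for (b), at index $\max M>\mu$ the exponent of $M$ is positive while $\MO$ has none, and neither monomial involves larger indices, so $M<\MO$, a contradiction; for (a), the same comparison---separating $\beta<\omega_\mu$ from $\beta=\omega_\mu$, the case $\beta>\omega_\mu$ being excluded by $d=|\omega|$---turns $\MO\le M$ into $\alpha\ge\omega'$. If $d<|\omega|$, I would instead invoke the almost reverse lexicographic hypothesis directly: since $M$ is a minimal generator of degree $d$, every monomial of degree $d$ larger than $M$ lies in $I$, so it suffices to produce a monomial $P$ of degree $d$ with $P\notin I$ and $P>M$. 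The divisors of $\MO/x_\mu=x^{\omega'}x_\mu^{\omega_\mu-1}$ supply such $P$: this monomial is not in $I$ (as $\MO$ is a minimal generator divisible by $x_\mu$), hence neither is any divisor of it, and since $\deg(\MO/x_\mu)=|\omega|-1\ge d$ there are divisors of degree exactly $d$. For (b) any such divisor $P$ is supported on $x_1,\dots,x_\mu$, so $\max P\le\mu<\max M$, which already forces $P>M$. For (a) one argues by contradiction: assuming $\alpha<\omega'$ (hence $|\alpha|\le|\omega'|$), write $P=x^{p'}x_\mu^{p_\mu}$ with $x^{p'}\mid x^{\omega'}$, $p_\mu\le\omega_\mu-1$, $|p'|+p_\mu=d$ (a degree-$d$ divisor of $\MO/x_\mu$), and choose the split with care: if $|\alpha|<|\omega'|$, take $p_\mu=\min(\beta-1,\omega_\mu-1)$, so that $p_\mu<\beta$ already forces $P>M$ (the two monomials agree above index $\mu$ and $P$ has the smaller exponent there), while $d<|\omega|$ and $|\alpha|<|\omega'|$ ensure $0\le d-p_\mu\le|\omega'|$, leaving room to pick $p'$; if $|\alpha|=|\omega'|$, then $d<|\omega|$ forces $\beta<\omega_\mu$, and $p_\mu=\beta$, $p'=\omega'$ give $P=x^{\omega'}x_\mu^{\beta}>x^{\alpha}x_\mu^{\beta}=M$ because $\alpha<\omega'$ with $|\alpha|=|\omega'|$.

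The step I expect to be the main obstacle is exactly this last one: in the case $\max M=\mu$, $\deg M<|\omega|$, producing a monomial outside $I$ of degree $\deg M$ that beats $M$ in the term order. An arbitrary degree-$d$ divisor of $\MO/x_\mu$ need not do the job; one has to apportion the degree between the $x_\mu$-exponent and the remaining variables so that $P$ simultaneously divides $\MO/x_\mu$ (keeping it out of $I$) and rises above $M$, and the division into the cases $|\alpha|<|\omega'|$ and $|\alpha|=|\omega'|$ is precisely what makes a workable apportionment available. Everything else---unwinding the degree reverse lexicographic order, quoting Lemmas~\ref{Lem:MinGen_Basic} and~\ref{Lem:Property_Of_F's_Ext}, and using that a monomial in a strongly stable ideal is a minimal generator iff dividing it by its largest variable leaves the ideal---is routine bookkeeping.
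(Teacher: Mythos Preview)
Your proof is correct and follows essentially the same route as the paper's: both reduce via Lemmas~\ref{Lem:MinGen_Basic} and~\ref{Lem:Property_Of_F's_Ext} to the case $\max M=\mu$, assume $\alpha<\omega'=(\omega_1,\dots,\omega_{\mu-1})$ for a contradiction, split into the subcases $|\alpha|=|\omega'|$ and $|\alpha|<|\omega'|$, and in each subcase exhibit a monomial of degree $\deg M$ that is simultaneously forced into $I$ by the ARL hypothesis and out of $I$ by the minimality of $M_\omega$. Your version is marginally more complete in that you explicitly rule out $\max M>\mu$ (the paper silently takes this for granted), and in the subcase $|\alpha|<|\omega'|$ you build a divisor of $M_\omega/x_\mu$ directly rather than passing through an auxiliary monomial $x_{\mu-1}^{|\alpha|+t}x_\mu^{\beta-t}$ and strong stability as the paper does---but these are cosmetic differences, not a different strategy.
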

   \begin{proof}
      If $\mu = 1$, then there is no generator other than $x_1^{f_1}$. So there is nothing to prove. Suppose $\mu \ge 2$. By Lemma \ref{Lem:MinGen_Basic} and \ref{Lem:Property_Of_F's_Ext}, it is enough to show that for any minimal generator $M=x_1^{\alpha_1} \cdots x_{\mu-1}^{\alpha_{\mu-1}}x_{\mu}^{\beta}$ with $\beta > 0$, the set $\II_{\mu-1}$ contains $\alpha=(\alpha_1,\ldots,\alpha_{\mu-1})$. Set $\OT=(\omega_1,\ldots,\omega_{\mu-1})$. Then $\OT \in \II_{\mu-1}$ and $\omega_{\mu} = f_{\mu}(\OT)$.

      If either $\alpha_1 \ge f_1$, or there is an integer $i$ with $2 \le i \le \mu-1$ such that $\alpha_i \ge f_{i}(\alpha_1,\ldots,\alpha_{i-1})$, then  $x_1^{\alpha_1} \cdots x_{i}^{\alpha_i}$ belongs to $I$ by the definition of $f_i$. But then the monomial $M=x^{\alpha}x_{\mu}^{\beta}$ can't be a minimal generator of $I$.

      Hence, in order to show that $\alpha \in \II_{\mu-1}$, it suffices to show $\alpha \ge \OT$. Suppose on the contrary that $\alpha < \OT$. Then $|\alpha| \le |\OT|$. Since $\MO$ is the last generator of $I$, $\deg M = |\alpha| + \beta \le \deg \MO = |\OT| + \omega_{\mu}$. But if $\deg M = \deg \MO$, then $\omega_{\mu} \le \beta$ because $|\alpha| \le |\OT|$. This implies $\MO > M$, which contradicts to the definition of $\MO$. This shows that $|\alpha| + \beta < |\OT| + \omega_{\mu}$. Since $\alpha < \OT$, there are only two possibilities:
      \begin{enumerate}
        \item[Case I.] $|\alpha| = |\OT|$: Then $\beta < \omega_{\mu}$. Note that the monomial $x^{\OT} x_{\mu}^{\beta}$ has the same degree with $M$ and is greater than $M$. Since $M$ is a minimal generator of $I$, the monomial $x^{\OT} x_{\mu}^{\beta}$ is contained in $I$. But this contradicts that $\MO$ is a minimal generator.
        \item[Case II.] $|\alpha| < |\OT|$: Then we can always choose a positive integer $t$ such that $\beta - \omega_{\mu} < t \le \min \{\beta,|\OT| - |\alpha|\}$. Since $x_{\mu-1}^{|\alpha|+t}x_{\mu}^{\beta-t} > M$, we have $x_{\mu-1}^{|\alpha|+t}x_{\mu}^{\beta-t} \in I$. Hence $x_{\mu-1}^{|\OT|}x_{\mu}^{\beta-t}$ is contained in $I$. It follows that $x^{\OT} x_{\mu}^{\beta-t}$ is also contained in $I$ because $I$ is strongly stable. But this contradicts to $\MO \in \G$, since $\beta-t < \omega_{\mu}$
      \end{enumerate}
      In any case, $\alpha < \OT$ induces a contradiction. Hence $\alpha \ge \OT$, so $\alpha$ is contained in the set $\II_{\mu-1}$.
   \end{proof}

   Now we will give a criterion for a strongly stable ideal to be almost reverse lexicographic.

   \begin{Prop}\label{Prop:EquivCondARL}
   Let $I$ be a strongly stable ideal in $R=k[x_1,\ldots,x_n]$. Suppose that the last generator of $I$ is $\MO = x_1^{\omega_1} \cdots x_{\mu}^{\omega_{\mu}}$ for some $(\omega_1,\ldots, \omega_{\mu}) \in \mathbb{Z}^{\mu}_{\ge 0}$ with $\omega_{\mu} > 0$, and that $x_{\mu-1}^t \in I$ for some $t >0$.
   Then $I$ is almost reverse lexicographic if and only if for any $1 \le i \le \mu-1$ and any $\alpha,\beta \in \II_{i}$ with $\alpha > \beta$,
   \[
       |\alpha| + f_{i+1}(\alpha) \le |\beta| + f_{i+1}(\beta).
   \]
   \end{Prop}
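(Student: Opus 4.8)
The plan is to prove the two implications separately; the forward one is a quick consequence of the known structure of $\G(I)$, while the converse carries the substance.

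$(\Rightarrow)$ Assume $I$ is almost reverse lexicographic, so that $\G(I)$ is the set displayed in \eqref{Set:MGS}. Fix $1\le i\le\mu-1$ and $\alpha,\beta\in\II_i$ with $\alpha>\beta$, put $d:=|\beta|+f_{i+1}(\beta)=\deg\bigl(x^\beta x_{i+1}^{f_{i+1}(\beta)}\bigr)$, and suppose toward a contradiction that $|\alpha|+f_{i+1}(\alpha)>d$. The idea is to produce a monomial $P$ with $\deg P=d$, $P\notin I$, and $P>x^\beta x_{i+1}^{f_{i+1}(\beta)}$; since $x^\beta x_{i+1}^{f_{i+1}(\beta)}$ is a minimal generator of $I$ by \eqref{Set:MGS}, this contradicts the definition of an almost reverse lexicographic ideal. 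For $P$ I would take $P=x^\alpha x_{i+1}^{\,d-|\alpha|}$ when $|\alpha|\le d$, and $P=x_1^{\alpha_1}\cdots x_{\ell-1}^{\alpha_{\ell-1}}x_\ell^{\,d-(\alpha_1+\cdots+\alpha_{\ell-1})}$ when $|\alpha|>d$, where $\ell\le i$ is the least index with $\alpha_1+\cdots+\alpha_\ell>d$. Using the elementary fact that a monomial $x^\gamma x_\ell^c$ (with $\gamma\in\ZZ^{\ell-1}_{\ge 0}$) lies in $I$ if and only if $c\ge f_\ell(\gamma)$, one checks $P\notin I$: in the first case $d-|\alpha|<f_{i+1}(\alpha)$ by hypothesis, in the second $d-(\alpha_1+\cdots+\alpha_{\ell-1})<\alpha_\ell<f_\ell(\alpha_1,\dots,\alpha_{\ell-1})$ because $\alpha\in\II_i$. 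The inequality $P>x^\beta x_{i+1}^{f_{i+1}(\beta)}$ is then a direct reverse-lexicographic comparison, carried out in the three cases $|\alpha|>d$ (here $\max P\le i$ while $x^\beta x_{i+1}^{f_{i+1}(\beta)}$ has positive exponent at $i+1$), $|\alpha|\le d$ with $|\alpha|>|\beta|$ (here the exponent of $x_{i+1}$ in $P$ is strictly less than in $x^\beta x_{i+1}^{f_{i+1}(\beta)}$), and $|\alpha|=|\beta|$ (here the $x_{i+1}$-exponents agree and the comparison reduces to $\alpha>\beta$). This direction is routine once these cases are separated.

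$(\Leftarrow)$ Here one may no longer invoke Proposition \ref{Prop:MinGenOfARL} or Lemma \ref{Lem:Property_Of_F's_Ext}, whose proofs presuppose the almost reverse lexicographic property; the tools at hand are strong stability, Lemmas \ref{Lem:MinGen_Basic} and \ref{Lem:NumberOflastGens}, and the monotonicity hypothesis. The first step is to show that this hypothesis forces $\G(I)$ to be exactly the set \eqref{Set:MGS}: the generators with $\max\le\mu-1$ are given by Lemma \ref{Lem:MinGen_Basic}; a minimal generator with $\max=\mu$ must, by minimality, be $x^\delta x_\mu^{f_\mu(\delta)}$ with $x^\delta\notin I$, and conversely strong stability shows every such monomial with $f_\mu(\delta)<\infty$ is a minimal generator, so it remains to see that the monotonicity inequalities with $i=\mu-1$, together with the defining property of $\MO=x^\omega$ as the last generator (maximal degree, and least in the term order among generators of that degree), force $\delta\ge\OT$ (i.e.\ $\delta\in\II_{\mu-1}$) and exclude any minimal generator with $\max\ge\mu+1$.

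Granting the description of $\G(I)$, the second step is to verify the almost reverse lexicographic property directly: it suffices to show that for every minimal generator $N$ of degree $d$, the reverse-lexicographically largest degree-$d$ monomial $M$ not in $I$ satisfies $M<N$. One first observes that the top index of the support of $M$ cannot be smaller than $\max N$, since otherwise $M>N$ automatically, and then compares $M$ with the generators of degree $d$ supplied by \eqref{Set:MGS}, chaining the monotonicity inequalities to conclude $M<N$. The step I expect to be the main obstacle is this first step of the converse: pinning down the minimal generators of index $\ge\mu$ with only the monotonicity hypothesis available rather than the almost reverse lexicographic property itself; the reverse-lexicographic bookkeeping everywhere else, though lengthy, is routine.
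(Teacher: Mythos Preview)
Your forward direction is correct and in fact handles a corner case the paper elides. The paper's argument is the two-line direct version of your contrapositive: for $\alpha>\beta$ in $\II_i$ it writes down the monomial $x^{\alpha}x_{i+1}^{\,f_{i+1}(\beta)-|\alpha|+|\beta|}$, notes it has the same degree as and is larger than the minimal generator $x^{\beta}x_{i+1}^{f_{i+1}(\beta)}$, and concludes from the ARL property that it lies in $I$, whence $f_{i+1}(\alpha)\le f_{i+1}(\beta)-|\alpha|+|\beta|$. Your case $|\alpha|>d$ is exactly the situation where that exponent would be negative; the paper does not address it, you do.

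For the converse the paper does \emph{not} carry out your Step~1. It argues directly: writing a minimal generator as $N=x^{\alpha}x_{s+1}^{a}$ with $s+1=\max N$ and $1\le s\le\mu-1$, and $M=x^{\beta}x_{s+1}^{b}>N$, it claims $\alpha\in\II_s$ (citing Proposition~\ref{Prop:MinGenOfARL}), deduces $\beta\in\II_s$ from $\beta>\alpha$, and applies the monotonicity hypothesis to get $b\ge f_{s+1}(\beta)$. The citation is circular (Proposition~\ref{Prop:MinGenOfARL} presupposes ARL); for $s\le\mu-2$ one may substitute Lemma~\ref{Lem:MinGen_Basic}, but for $s=\mu-1$ the inclusion $\alpha\ge\OT$, and the implicit claim $\max N\le\mu$, are not justified.

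You are right to single out this step as the obstacle, but it is not merely hard---under the stated hypotheses it is false, so neither your plan nor the paper's argument can be completed. Take $I=(x_1^2,\,x_1x_2,\,x_2^3,\,x_1x_3,\,x_2^2x_3^2)\subset k[x_1,x_2,x_3]$. This ideal is strongly stable, $\MO=x_2^2x_3^2$, $\mu=3$, $\OT=(0,2)$, and $x_2^3\in I$. One finds $f_1=2$, $f_2(0)=3$, $f_2(1)=1$, $\II_1=\{0,1\}$, $\II_2=\{(0,2)\}$; the monotonicity condition holds (for $i=1$ it reads $1+1\le 0+3$, for $i=2$ it is vacuous). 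Yet the minimal generator $x_1x_3$ has $(1,0)<\OT$, and $I$ is not almost reverse lexicographic because $x_2^2>x_1x_3$ while $x_2^2\notin I$. The proposition becomes correct, and the paper's direct argument valid, once one adds the hypothesis that $\G(I)$ already has the form~\eqref{Set:MGS}---which is how it is actually invoked in Lemma~\ref{Lem:Generating_ARL}, Proposition~\ref{Prop:ArlImpDecT}, and Proposition~\ref{Prop:FroSeqIsARL}.
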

   \begin{proof}
       $(\Rightarrow):$ Fix $i$. Since $\alpha > \beta$, we have $x^{\alpha} x_{i+1}^{f_{i+1}(\beta)-|\alpha|+|\beta|} > x^{\beta} x_{i+1}^{f_{i+1}(\beta)}$. Since $I$ is almost reverse lexicographic, $x^{\alpha} x_{i+1}^{f_{i+1}(\beta)-|\alpha|+|\beta|}$ is contained in $I$. This shows that the assertion is true.

       $(\Leftarrow):$ Let $M$ and $N$ be monomials of the same degree. Suppose that $N$ is a minimal generator of $I$, and that $M > N$. We have to show that $M$ belongs to $I$. If $\max N = 1$, then $N = x_{1}^{f_1}$. In this case, there is no monomial of degree $f_1$, which is strictly bigger than $N$. So we may assume that $\max N > 1$. Since $M > N$, we have $\max M \le \max N$. Suppose that $M = x_1^{\beta_1} \cdots x_{s}^{\beta_{s}}x_{s+1}^{b}$ and $N = x_1^{\alpha_1} \cdots x_{s}^{\alpha_{s}}x_{s+1}^{a}$ for some $1 \le s \le \mu-1$. If we set $\alpha=(\alpha_1,\ldots,\alpha_{s}), \beta=(\beta_1,\ldots,\beta_{s})$, then $\alpha \in \II_s$ and $a = f_{s+1}(\alpha)$ by Proposition \ref{Prop:MinGenOfARL}.

       If there is an integer $j$ such that $1 \le j \le s$ and $\beta_j \ge f_{j}(\beta_1,\ldots,\beta_{j-1})$, then $M$ is contained in $I$ by the definition of $f_j$, so nothing is left to prove. Hence we may assume that $\beta_{j} < f_{j}(\beta_1,\ldots,\beta_{j-1})$ for any $1 \le j \le s$.

       Since $M = x^{\beta} x_{s+1}^b$ and $N = x^{\alpha} x_{s+1}^a$ are monomials of the same degree with $M > N$, we have $b \le a$ and $\beta > \alpha$. It follows from $\alpha \in \II_{s}$ that $\beta$ is also contained in $\II_s$. By the hypothesis then we have
       \[
           |\beta| + f_{s+1}(\beta) \le |\alpha| + f_{s+1}(\alpha) = |\alpha| + a = |\beta| + b.
       \]
       This shows that $M = x^{\beta}x_{s+1}^b \in I$ because $f_{s+1}(\beta) \le b$.
   \end{proof}

   Note that from the condition in ``if part" of the above proposition, we have $|\alpha| + f_{\mu}(\alpha) \le |\OT| + f_{\mu}(\OT) = \deg \MO$ for any $\alpha \in \II_{\mu-1}$, where $\OT = (\omega_1,\ldots,\omega_{\mu-1})$. Hence the case with $f_{\mu}(\alpha) = \infty$ won't happen anymore. The following example shows that the condition is not so trivial even if $0< f_{\mu}(\alpha) < \infty$ for any $\alpha \in \II_{\mu-1}$.

   \begin{Ex}
      Consider the strongly stable ideal $I$ in $R=k[x,y,z]$ generated by the set
          \[
             \G(I) = \left\{
                          \begin{array}{cccc}
                              x^3,& x^2y,  & xy^3,   & y^5, \\
                                  &        &         & y^4z^2, \\
                                  &        & xy^2z^2,& y^3z^3, \\
                                  & x^2z^5
                              \end{array}
             \right\}.
          \]
      Note that $\MO(I) = x^2z^5$, and that $\II_{2} = \{(0,4),(1,2),(0,3),(2,0) \}$. For any element $\alpha \in \II_2$ we have $0 < f_3(\alpha) < \infty$. But $|(1,2)|+f_3(1,2) = 5 < 6 = |(0,4)| + f_3(0,4)$. So the ideal $I$ is not almost reverse lexicographic. Indeed, $y^4z > xy^2z^2$ but $y^4z \notin I$.
   \end{Ex}

   \begin{Coro}\label{Cor:ARLisWLPandSLP}
      Let $I$ be an almost reverse lexicographic ideal in a polynomial ring $R=k[x_1,\ldots,x_n]$. Suppose that the last generator of $I$ is $\MO = x_1^{\omega_1} \cdots x_{\mu}^{\omega_{\mu}}$ for some $(\omega_1,\ldots, \omega_{\mu}) \in \mathbb{Z}^{\mu}_{\ge 0}$ with $\omega_{\mu} > 0$, and that the minimal system $\G$ of generators of $I$ is of the form given in {\rm (\ref{Set:MGS})}. Let $1 \le i \le \mu-1$. For any $\alpha \in \II_{i-1}$ and $\beta \in \II_{i}$, we have
         \[
             |\alpha| + f_i(\alpha) \le f_{i}(0) \le |\beta| + f_{i+1}(\beta),
         \]
      where $0 \in \ZZ^{i-1}$. In particular if $N, M$ are minimal generators of $I$ with $\max N < \max M$, then $\deg N \le \deg M$.
   \end{Coro}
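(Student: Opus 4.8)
The plan is to prove the two displayed inequalities in turn and then read off the assertion about degrees of minimal generators. Throughout I would use the explicit description of $\G$ from Proposition~\ref{Prop:MinGenOfARL} together with the monotonicity in Lemma~\ref{Lem:Property_Of_F's_Ext}.

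For the left inequality $|\alpha|+f_i(\alpha)\le f_i(0)$ with $\alpha=(\alpha_1,\ldots,\alpha_{i-1})\in\II_{i-1}$, I would induct on $|\alpha|$ (the case $i=1$ is trivial since $\II_0$ is a singleton, and $|\alpha|=0$ gives equality). When $|\alpha|\ge 1$, pick an index $j$ with $\alpha_j\ge 1$ and pass to $\alpha'=(\alpha_1,\ldots,\alpha_j-1,\ldots,\alpha_{i-1})$. The key point is that $\alpha'$ still lies in $\II_{i-1}$: the only defining inequalities of $\II_{i-1}$ that could be affected are those of the form $\alpha'_l<f_l(\alpha'_1,\ldots,\alpha'_{l-1})$ for $l>j$, and these survive because Lemma~\ref{Lem:Property_Of_F's_Ext}(3) forces $f_l(\alpha'_1,\ldots,\alpha'_{l-1})\ge f_l(\alpha_1,\ldots,\alpha_{l-1})+1>\alpha_l=\alpha'_l$. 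Then a further application of Lemma~\ref{Lem:Property_Of_F's_Ext}(3) at level $i-1$ gives $f_i(\alpha)+1\le f_i(\alpha')$, and combining this with $|\alpha'|=|\alpha|-1$ and the inductive hypothesis yields $|\alpha|+f_i(\alpha)\le|\alpha'|+f_i(\alpha')\le f_i(0)$.

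For the right inequality $f_i(0)\le|\beta|+f_{i+1}(\beta)$ with $\beta\in\II_i$, I would invoke almost reverse lexicographicness directly. By Proposition~\ref{Prop:MinGenOfARL} the monomial $N:=x^\beta x_{i+1}^{f_{i+1}(\beta)}$ is a minimal generator of $I$, and Lemma~\ref{Lem:Property_Of_F's_Ext}(1) makes $f_{i+1}(\beta)$ positive and finite. Put $D:=|\beta|+f_{i+1}(\beta)=\deg N$ and $M:=x_i^D$. Comparing exponent vectors, $M$ and $N$ both have exponent $0$ in every variable $x_{i+2},\ldots,x_n$, while $x_{i+1}$ has exponent $0$ in $M$ and exponent $f_{i+1}(\beta)>0$ in $N$; hence $M>N$ in the degree reverse lexicographic order. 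Since $\deg M=\deg N$ and $I$ is almost reverse lexicographic, $M=x_i^D\in I$, and therefore $f_i(0)\le D=|\beta|+f_{i+1}(\beta)$ by the definition of $f_i$.

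For the last assertion, let $N,M\in\G$ with $a:=\max N<b:=\max M$, so $1\le a\le\mu-1$ and $2\le b\le\mu$. Writing $N$ and $M$ in the form (\ref{Set:MGS}), the left inequality gives $\deg N\le f_a(0)$, and the right inequality at level $b-1$ gives $\deg M\ge f_{b-1}(0)$; the right inequality applied to $\beta=0\in\II_l$ for $a\le l\le b-2$ produces the chain $f_a(0)\le f_{a+1}(0)\le\cdots\le f_{b-1}(0)$, whence $\deg N\le f_a(0)\le f_{b-1}(0)\le\deg M$. The only step that requires some care is the verification that the perturbed tuple $\alpha'$ stays in $\II_{i-1}$ in the induction of the first part; that is exactly the role of the monotonicity statement in Lemma~\ref{Lem:Property_Of_F's_Ext}(3). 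Everything else is a routine unwinding of the description of $\G$ and of the definition of the reverse lexicographic order.
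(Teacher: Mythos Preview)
Your argument is correct, but it diverges from the paper's proof on both inequalities. For the left inequality the paper avoids induction entirely: it first notes that $|\alpha|<f_i(0)$ (otherwise $x_i^{|\alpha|}\in I$ and strong stability would force $x^\alpha\in I$, contradicting $\alpha\in\II_{i-1}$), and then observes that $x_i^{f_i(0)}\in I$ together with strong stability gives $x^{\alpha}x_i^{f_i(0)-|\alpha|}\in I$, so $f_i(\alpha)\le f_i(0)-|\alpha|$ in one stroke. Your inductive approach via Lemma~\ref{Lem:Property_Of_F's_Ext}(3) also works, but requires the extra verification that $\alpha'\in\II_{i-1}$, which you handle correctly. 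For the right inequality the situation is reversed: the paper introduces the auxiliary element $\delta=(0,\ldots,0,|\beta|+1)\in\ZZ^i$, splits into the cases $\delta\in\II_i$ and $\delta\notin\II_i$, and in the former case appeals to Proposition~\ref{Prop:EquivCondARL}; your direct use of the almost reverse lexicographic property on the single monomial $x_i^{|\beta|+f_{i+1}(\beta)}$ is shorter and more transparent. The final assertion about degrees is handled the same way in both, by chaining the inequalities through $f_a(0)\le f_{a+1}(0)\le\cdots\le f_{b-1}(0)$.
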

   \begin{proof}
      We claim that $|\alpha| < f_i(0)$. Indeed, if $|\alpha| \ge f_i(0)$, then $x_i^{|\alpha|} \in I$. Since $I$ is strongly stable, $x^{\alpha}= x_1^{\alpha_1} \cdots x_{i-1}^{\alpha_{i-1}} \in I$. But then $\alpha$ can't be an element of $\II_{i-1}$ by Remark \ref{Remk:When_F_be_Zero} and Lemma \ref{Lem:Property_Of_F's_Ext}. This shows that $|\alpha| < f_i(0)$. From the strongly stableness of $I$, then one can see that $x_{i-1}^{|\alpha|} x_{i}^{f_i(0) - |\alpha|}$, and hence $x^{\alpha} x_{i}^{f_i(0) - |\alpha|}$ is also contained in $I$. By the definition of $f_i$, the first inequality follows.

      For the second inequality, consider the $i$-tuple $\delta=(0,\ldots,0,|\beta|+1)$. Note that $\delta > \beta$. If $\delta \notin \II_i$, then $|\beta| + 1 \ge f_{i}(0)$, since the $(i-1)$-tuple $0$ is contained in $\II_{i-1}$. This shows that $x_i^{|\beta|+1} \in I$. It follows that $f_i(0) \le |\beta| + 1 \le |\beta| + f_{i+1}(\beta)$, thus we are done. Now suppose that $\delta \in \II_i$. Since $x^{\delta}x_{i+1}^{f_{i+1}(\delta)} = x_{i}^{|\beta|+1} x_{i+1}^{f_{i+1}(\delta)} \in I$, the monomial $x_i^{|\beta|+1 + f_{i+1}(\delta)}$ is also contained in $I$. Hence we have
      \begin{align*}
          f_i(0) &\le |\beta|+ 1 + f_{i+1}(\delta) = |\beta| + 1 + f_{i+1}(0,\ldots,0,|\beta|+1)\\
                 &\le |\beta| + f_{i+1}(\beta),
      \end{align*}
      where the last inequality follows from Proposition \ref{Prop:EquivCondARL}.
   \end{proof}
   \vskip 2cm


\section{\sc Hilbert Functions}

   In this section we study Hilbert functions of almost reverse lexicographic ideals in a polynomial ring over a field of arbitrary characteristic. Then we give a criterion for a sequence of nonnegative integers to be induced from an almost reverse lexicographic ideal in a polynomial ring.

   \begin{Defn}
      A sequence $\HB=(h_0, h_1, h_2, \ldots)$ of nonnegative integers is said to be induced from a homogeneous ideal $I$ in a polynomial ring $R=k[x_1,\ldots,x_{n}]$ if $h_d = H(R/I,d)$ for any $d \ge 0$. In this case, we also say that the Hilbert function of $R/I$ is given by the sequence $\HB$.
   \end{Defn}

   Let $\HB = (h_0, h_1, h_2, \ldots)$ be a sequence of nonnegative integers with $h_0 = 1$. Now we will define some sequences induced from the sequence $\HB$. First we denote the sequence $\HB$ by $h^{(0)}_\bullet$, i.e.\ $h^{(0)}_d = h_d$ for all $d$. Then for $1 \le i < h_1$, we define the sequences $h^{(i)}_\bullet$ inductively by
   \[
      h^{(i)}_d = \max\{ 0, \ h^{(i-1)}_d - h^{(i-1)}_{d-1}\} \text{ for } d \ge 1,
   \]
   and $h^{(i)}_0 = 1$. Next we set
   \[
      r_i = \min\{ \ d \ | \ h^{(i)}_{d} \le h^{(i)}_{d-1} \} \text{ for } 0 \le i < \max\{1,h_1\},
   \]
   and we define
   \[
      D(\HB) = \min\{ \ i \ | \ r_{i} < \infty \}.
   \]

   \begin{Remk}\label{Remk:D_and_r}
     Suppose that $\HB = (h_0, h_1, h_2, \ldots)$ is a sequence of nonnegative integers with $h_0 = 1$.
     \begin{enumerate}
     \item If $0 \le h_1 \le 1$, then $r_0 = 1$ since $h_0=1$. Hence $D(\HB) = 0$.
     \item Suppose $h_1 > 1$. Recall that $h^{(i)}_0 = 1$. Since $h^{(i)}_1 = h_1 - i$ for any $1 \le i < h_1$, we have $r_{h_1-1} = 1$. Thus $D(\HB) \le h_1-1$.
     \end{enumerate}
     This shows that $D(\HB) \le h_1$, and that the condition in the following definition is not an empty condition.
   \end{Remk}

   \begin{Defn}\label{Def:UniTail}
      A sequence $\HB$ of nonnegative integers with $h_0 = 1$ is said to be unimodal at each tail if $\HIB$ is unimodal for any $D(\HB) \le i < \max\{1, h_1\}$, i.e.\
         \[
             h^{(i)}_d \le h^{(i)}_{d-1} \text{ for all } d \ge r_i.
         \]
   \end{Defn}

   The following lemma, which is introduced in the papers \cite{ACP,AS}, gives information on the Hilbert function of an almost reverse lexicographic ideal, together with its corollary. Its corollary is similar with the one given in the paper\cite{ACP}, but we don't need to assume that $R/I$ is Artinian(See Corollary 3.3 in the paper\cite{ACP}).

   \begin{Lem}\cite{ACP,AS}\label{Lem:Cond_Belong_In_G(ginI)}
       Let $I$ be a strongly stable ideal in the polynomial ring $R=k[x_1,\ldots,x_n]$ with $\max \MO(I) = n$. If $M$ is a monomial in $(I:x_n)$ of degree $d$ whose canonical image in $(I:x_n)/I$ is not zero, then $M x_n \in \G$. In particular,
       \[
          \dim_k ((I:x_n)/I)_d = |\{ N \in \G \ | \, \max N = n, \deg N = d+1 \}|.
       \]
   \end{Lem}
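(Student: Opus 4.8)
The plan is to exploit the combinatorial rigidity of strongly stable monomial ideals. First I would recall the standard fact that for a strongly stable ideal $I$, the monomial $M$ lies in $I$ if and only if every monomial of the same degree that is $\ge M$ in the reverse lexicographic order need not be in $I$ — more precisely, I will use the structure of the Borel order: if $M \notin I$ but $x_n M \in I$, then $x_n M$ is "just barely" in $I$. The key elementary observation is that for any variable $x_j$ with $j < n$, since $x_n M \in I$ and $I$ is strongly stable, we also have $x_j M \in I$ (apply the exchange $x_n \to x_j$ to the monomial $x_n M$, which is legal because $x_n$ divides $x_n M$). Thus $x_j M \in I$ for all $j$, i.e. $\mathfrak{m} M \subseteq I$ where $\mathfrak{m} = (x_1,\ldots,x_n)$, while $M \notin I$ (since its image in $(I:x_n)/I$ is nonzero, in particular $M \notin I$).

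Next I would argue that $x_n M$ is a minimal generator. Suppose not; then $x_n M = x_i N$ for some variable $x_i$ and some $N \in I$ with $\deg N = d$. If $i = n$, then $N = M$, contradicting $M \notin I$. If $i < n$, then $x_i$ divides $x_n M$, so $x_i$ divides $M$ (as $i \neq n$), and $N = (x_n M)/x_i = x_n (M/x_i)$; hence $x_n(M/x_i) \in I$. But then, applying strong stability to $x_n(M/x_i)$ with the exchange $x_n \to x_i$ — legal since $x_n \mid x_n(M/x_i)$ — we get $x_i (M/x_i) = M \in I$, again a contradiction. Therefore $x_n M \in \G$, proving the first assertion.

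For the displayed dimension formula, I would set up a bijection between a $k$-basis of $((I:x_n)/I)_d$ and the set $\{N \in \G \mid \max N = n,\ \deg N = d+1\}$. Since $I$ is a monomial ideal, $(I:x_n)/I$ is spanned in degree $d$ by (the images of) the monomials $M$ with $M \notin I$ and $x_n M \in I$, and these images are $k$-linearly independent. The map $M \mapsto x_n M$ sends this set into $\G$ by the first part, and clearly lands in the generators $N$ with $\max N = n$ (since $x_n \mid N$) and $\deg N = d+1$. Conversely, given $N \in \G$ with $\max N = n$ and $\deg N = d+1$, write $N = x_n M$ with $\deg M = d$; then $x_n M = N \in I$, and $M \notin I$ since otherwise $N$ would not be a minimal generator. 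So $M$ contributes to the basis of $((I:x_n)/I)_d$, and $M \mapsto x_n M = N$ is the inverse map. This establishes the bijection, and hence the equality of dimensions.

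The only mildly delicate point is making sure the spanning monomials of $((I:x_n)/I)_d$ really do form a basis, i.e. that distinct monomials outside $I$ remain linearly independent modulo $I$; this is immediate because $I$ is a monomial ideal, so $R/I$ has a monomial $k$-basis and $(I:x_n)$ is again a monomial ideal. I expect no genuine obstacle here — the entire argument rests on the single exchange move $x_n \to x_j$ permitted by strong stability — so the main thing to be careful about is stating the Borel-exchange steps in the right direction (always exchanging the largest-index variable dividing the monomial for a smaller-index one).
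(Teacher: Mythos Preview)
Your argument is correct and follows essentially the same route as the paper's proof: assume $x_nM$ is not a minimal generator, produce a factorization, and use one Borel exchange to force $M\in I$. The only cosmetic difference is that the paper factors $x_nM = G\cdot N$ with $G\in I$ and $\deg N\ge 1$, splitting into the cases $x_n\mid N$ versus $x_n\mid G$, whereas you factor $x_nM = x_i\cdot N$ with $N\in I$ and split on $i=n$ versus $i<n$; the strong-stability step is identical in spirit. Your opening observation that $\mathfrak m M\subseteq I$ is correct but not actually used in the minimality argument, so you may drop it. The bijection for the dimension formula matches the paper's converse paragraph exactly.
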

   \begin{proof}
       Suppose on the contrary that $M x_n$ is not a minimal generator of $I$. Then there are monomials $G \in I$ and $N \in R$ such that $GN = M x_n$ with $\deg N \ge 1$. If $x_n$ divides $N$, then $M = G(N/x_n) \in I$, a contradiction. This shows that $x_n$ must divide $G$ but not $N$. Since $N$ is a monomial of positive degree, there is an integer $j < n$ such that $x_j$ divides $N$. Thus we have $M = (N/x_j)[x_j(G/x_n)] \in I$, since $I$ is strongly stable. This contradicts to $M \notin I$. Hence $Mx_n$ is a minimal generator of $I$.

       Conversely, if $N$ is a minimal generator of $I$ such that $\max N = n$, then $N/x_n$ is a monomial in $(I:x_n)$ whose image in $I$ is not zero. So, the second assertion follows immediately from the first.
   \end{proof}

   \begin{Coro}{\rm(cf.\ \cite{ACP})}\label{Cor:HilbertF_of_ARL_1st}
      Let $I$ be an almost reverse lexicographic ideal in $R=k[x_1,\ldots,x_n]$. Suppose that the last generator of $I$ is $\MO = x^{\omega}$ for some $\omega = (\omega_1,\ldots,\omega_{\mu}) \in \ZZ^{\mu}_{\ge 0}$ with $\omega_{\mu} > 0$, and that the minimal system $\G$ of generators of $I$ is of the form given in {\rm(\ref{Set:MGS})}. If $\mu = n$, then we have:
      \begin{enumerate}
        \item For any $d \ge f_{n-1}(0)$,
             \begin{align*}
                  H(R/I, d-1) &- H(R/I, d) \\
                                  &= |\{ M \in \G \ | \, \max M = n, \deg M = d \}|.
             \end{align*}
             In particular, $H(R/I,d) \le H(R/I,d-1)$ if $d \ge f_{n-1}(0)$.
        \item $H(R/I,d) > H(R/I,d-1)$ for any $d < f_{n-1}(0)$.
        \item $f_{n-1}(0) = \min\{d | H(R/I,d) \le H(R/I,d-1) \}$.
        \item $H(R/I+(x_{n}),d) = \max\{0,H(R/I,d)-H(R/I,d-1)\}$ for any $d$.
      \end{enumerate}
   \end{Coro}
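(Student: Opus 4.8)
The plan is to derive all four parts simultaneously from the four-term exact sequence given by multiplication by $x_n$: for every $d$,
\[
   0 \RA ((I:x_n)/I)_{d-1} \RA (R/I)_{d-1} \xrightarrow{\times x_n} (R/I)_d \RA (R/(I+(x_n)))_d \RA 0 ,
\]
whose alternating sum of dimensions yields
\[
   H(R/I,d-1) - H(R/I,d) = \dim_k((I:x_n)/I)_{d-1} - H(R/(I+(x_n)),d).
\]
Because $\mu = \max \MO(I) = n$, Lemma~\ref{Lem:Cond_Belong_In_G(ginI)} rewrites the first summand on the right as $|\{\,N\in\G : \max N = n,\ \deg N = d\,\}|$, and Corollary~\ref{Cor:ARLisWLPandSLP}, applied with $i=\mu-1$, shows every minimal generator with $\max = n$ has degree at least $f_{n-1}(0)$, so that summand vanishes whenever $d < f_{n-1}(0)$. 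Thus everything reduces to computing $H(R/(I+(x_n)),d)$.

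For that I would identify $R/(I+(x_n))$ with $S/I'$, where $S = k[x_1,\dots,x_{n-1}]$ and $I' = I\cap S$ is the monomial ideal generated by those elements of $\G$ not divisible by $x_n$; by Proposition~\ref{Prop:MinGenOfARL} this is exactly $\{x_1^{f_1}\}\cup\bigcup_{i=1}^{\mu-2}\{x^\alpha x_{i+1}^{f_{i+1}(\alpha)} : \alpha\in\II_i\}$. The ideal $I'$ is again strongly stable (restricting a strongly stable monomial ideal to $x_n=0$ preserves strong stability) and it contains the pure power $x_{n-1}^{f_{n-1}(0)}$, which is finite since $x_{n-1}^{|\omega|}\in I$. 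Two elementary facts about $I'$ then finish everything. First, a strongly stable ideal containing $x_{n-1}^{N}$ contains every monomial of $S$ of degree $\ge N$: any such monomial is reached from $x_{n-1}^N$ by a succession of replacements of one $x_i$ by some $x_j$ with $j<i$, each of which stays inside the ideal; hence $H(S/I',d)=0$ for all $d\ge f_{n-1}(0)$. Second, for $d<f_{n-1}(0)$ the monomial $x_{n-1}^d$ lies outside $I'$, because the only pure power of $x_{n-1}$ among the minimal generators of $I$ is $x_{n-1}^{f_{n-1}(0)}$; hence $H(S/I',d)\ge 1$ there.

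With these in hand the assembly is routine. For $d<f_{n-1}(0)$ the first summand on the right of the displayed identity is $0$, so $H(R/I,d)-H(R/I,d-1) = H(S/I',d) > 0$, which is part (2) and, since $\max\{0,H(R/I,d)-H(R/I,d-1)\} = H(S/I',d)$ on this range, simultaneously gives part (4). For $d\ge f_{n-1}(0)$ we have $H(R/(I+(x_n)),d) = H(S/I',d) = 0$, so $H(R/I,d-1)-H(R/I,d) = \dim_k((I:x_n)/I)_{d-1} = |\{\,N\in\G : \max N = n,\ \deg N = d\,\}| \ge 0$; this is part (1), and because the difference $H(R/I,d)-H(R/I,d-1)$ is then $\le 0$, part (4) also holds. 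Part (3) follows at once from (1) and (2): the Hilbert function strictly increases below $f_{n-1}(0)$ and does not increase from $f_{n-1}(0)$ on, so $f_{n-1}(0)$ is the first index where it fails to increase. The corner cases $d=0$ and the inequality $f_{n-1}(0)\ge 1$ need only a one-line check.

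The step I expect to cost the most care is the reduction $R/(I+(x_n))\cong S/I'$ together with the two structural facts about $I'$: that it is strongly stable and that its unique pure power of $x_{n-1}$ among minimal generators is $x_{n-1}^{f_{n-1}(0)}$. Once those are pinned down, the combinatorial lemma on strongly stable ideals containing a pure power and the dimension bookkeeping from the exact sequence are entirely mechanical.
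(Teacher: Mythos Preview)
Your proof is correct and follows essentially the same route as the paper: the same four-term exact sequence, Lemma~\ref{Lem:Cond_Belong_In_G(ginI)} for the colon-ideal term, Corollary~\ref{Cor:ARLisWLPandSLP} to control degrees of generators with $\max = n$, and the strongly-stable argument to kill $(R/(I+(x_n)))_d$ for $d\ge f_{n-1}(0)$. One tiny slip to fix in your write-up: a monomial of degree $d>N$ in $S$ is not reached from $x_{n-1}^N$ by variable swaps (those preserve degree); rather, $x_{n-1}^d\in I'$ as a multiple of $x_{n-1}^N$, and then swaps $x_{n-1}\to x_j$ with $j<n-1$ reach every degree-$d$ monomial of $S$.
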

   \begin{proof}
      From the exact sequence
      \[
          0 \RA ((I:x_{n})/I)_{d-1} \RA (R/I)_{d-1} \xrightarrow[\qquad]{\times x_{n}}
         (R/I)_{d} \RA (R/I+(x_{n}))_{d} \RA 0,
      \]
      we have
      \[
           H(R/I,d) - H(R/I,d-1) = H(R/I+(x_{n}),d) - \dim_k ((I:x_{n})/I)_{d-1}.
      \]

      If $d \ge f_{n-1}(0)$, then $x_{n-1}^{d}$ is contained in $I$. Hence for any $(n-1)$-tuple $\alpha=(\alpha_1,\ldots,\alpha_{n-1}) \in \ZZ^{n-1}_{\ge 0}$ with $|\alpha| = d$, $I$ contains the monomial $x^\alpha$ because $I$ is strongly stable. This shows that $(R/I + (x_{n}))_d = 0$. It follows that $H(R/I,d-1) - H(R/I,d) = \dim_k ((I:x_{n})/I)_{d-1}$. From Lemma \ref{Lem:Cond_Belong_In_G(ginI)}, the first assertion follows.

      On the other hand, suppose that $d < f_{n-1}(0)$. Then there is no minimal generator $M$ of $I$ with $\max M = n$ and $\deg M = d$ by Corollary \ref{Cor:ARLisWLPandSLP}. This implies that $((I:x_{n})/I)_{d-1} = 0$ by Lemma \ref{Lem:Cond_Belong_In_G(ginI)}. Hence we have $H(R/I,d) - H(R/I,d-1) = H(R/I+(x_{n}),d)$. Now we will show that $H(R/I+(x_n),d) > 0$. If $H(R/I+(x_{n}),d) = 0$, then $I$ must contain $x_{n-1}^d$. Hence $d \ge f_{n-1}(0)$. This shows that $H(R/I+(x_n), d) > 0$ if $d < f_{n-1}(0)$. Therefore the second assertion holds.

      The third assertion is followed clearly from the first and second assertions. Using the assertions (1), (2) and the exact sequence at the beginning, we can see that the fourth assertion follows.
   \end{proof}

   Now we will show that a sequence $\HB$ of nonnegative integers is unimodal at each tail if and only if it is induced from an almost reverse lexicographic ideal. To show this, we regard the zero ideal as an almost reverse lexicographic ideal.

   \begin{Prop}\label{Prop:ArlImpDecT}
      Let $I$ be an almost reverse lexicographic ideal in a polynomial ring $R=k[x_1,\dots,x_n]$. Then the sequence $\HB=(h_0,h_1,h_2,\ldots)$ induced from $I$ is unimodal at each tail.
   \end{Prop}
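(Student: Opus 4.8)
The plan is to induct on the number of variables $n$, reducing modulo the last variable $x_n$ and using Corollary~\ref{Cor:HilbertF_of_ARL_1st}. Write $\mu = \max \MO(I)$. If $\mu < n$, then $I$ lives in $k[x_1,\ldots,x_\mu]$ and none of the variables $x_{\mu+1},\ldots,x_n$ appears in any generator; in that case the Hilbert function of $R/I$ is obtained from that of $k[x_1,\ldots,x_\mu]/I'$ (where $I'$ is the same ideal viewed in fewer variables) by repeated partial-summation, and one checks directly that this operation sends a sequence that is unimodal at each tail to another such sequence — indeed the derived sequences $\HIB$ are unaffected in the relevant range by the extra variables, and the key quantity $D(\HB)$ and the tails only get shifted in an innocuous way. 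So the substantive case is $\mu = n$, which I will assume from now on.

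When $\mu = n$, set $I'' = I + (x_n)$, an almost reverse lexicographic ideal in $k[x_1,\ldots,x_{n-1}]$ (one should check almost-rev-lex is inherited by $I+(x_n)$, which follows because the generators of $I''$ other than $x_n$ are exactly the generators of $I$ with $\max < n$, together with the $x_n$-free parts of the $\max = n$ generators in degree $f_{n-1}(0)$ and above, and these sit rev-lex-correctly). By the inductive hypothesis the sequence $\GB$ induced from $I''$ is unimodal at each tail. Now Corollary~\ref{Cor:HilbertF_of_ARL_1st}(4) says $g_d = \max\{0, h_d - h_{d-1}\}$ for all $d$, where $\HB$ is the sequence induced from $I$; that is, $\GB$ is exactly the first derived sequence $\HB^{(1)}$ of $\HB$ — provided $h_1 > 1$, so that $\HB^{(1)}$ is defined. (If $h_1 \le 1$ the claim is trivial by Remark~\ref{Remk:D_and_r}(1), since then $\HB$ itself is eventually nonincreasing.) Moreover Corollary~\ref{Cor:HilbertF_of_ARL_1st}(2),(3) pin down $r_0$ for $\HB$: it equals $f_{n-1}(0)$, and $\HB$ is strictly increasing before $r_0$.

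It remains to transfer ``unimodal at each tail'' from $\GB = \HB^{(1)}$ up to $\HB$. By definition, the derived sequences of $\HB$ are $h^{(0)}_\bullet = \HB$, $h^{(1)}_\bullet = \GB$, $h^{(2)}_\bullet = \GB^{(1)}$, and in general $h^{(i)}_\bullet = \GB^{(i-1)}$ for $i \ge 1$; and $h^{(1)}_1 = h_1 - 1 = g_1$, so the index ranges match up after the shift $i \mapsto i-1$. Hence: for $i \ge 1$, $h^{(i)}_\bullet$ is unimodal exactly when $\GB^{(i-1)}$ is, which holds for $D(\GB) \le i-1 < \max\{1,g_1\}$ by the inductive hypothesis — i.e.\ for $\max\{1, D(\GB)+1\} \le i < \max\{1,h_1\}$. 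So the only tails of $\HB$ left to control are $i = 0$ (handled by Corollary~\ref{Cor:HilbertF_of_ARL_1st}(2),(3): $\HB$ increases then is nonincreasing, so it is unimodal and $D(\HB)=0$) and, if $D(\GB) \ge 1$, the indices $1 \le i \le D(\GB)$. For those $i$ one has $r_{i-1} = \infty$ for $\GB$, i.e.\ $\GB^{(i-1)}$ is nondecreasing, so $h^{(i)}_\bullet$ is nondecreasing, hence (vacuously) unimodal on its tail. Combining all cases gives that $\HB$ is unimodal at each tail, which is the claim. The main obstacle I anticipate is the bookkeeping in the $\mu < n$ reduction and, in the $\mu = n$ case, verifying cleanly that $I + (x_n)$ is again almost reverse lexicographic so the induction closes; the index-shifting between $\HIB$ and the derived sequences of $\GB$ is routine once set up carefully.
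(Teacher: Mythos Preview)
Your approach is essentially the paper's: both reduce modulo $x_n$ and invoke Corollary~\ref{Cor:HilbertF_of_ARL_1st} to identify $h^{(1)}_\bullet$ with the Hilbert function of the quotient, then iterate; you package this as induction on $n$, while the paper builds the whole tower $I^{(i)}\subset R^{(i)}=k[x_1,\dots,x_{n-i}]$ at once and checks directly that $h^{(i)}_\bullet=H(R^{(i)}/I^{(i)},\,\cdot\,)$, but the content is the same.

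One correction is needed in your parenthetical about $I''=I+(x_n)$: the minimal generators of $I''$ other than $x_n$ are exactly the $M\in\G(I)$ with $\max M<n$, and nothing else. The ``$x_n$-free parts'' $x^\alpha$ of generators $x^\alpha x_n^\beta$ with $\beta>0$ are \emph{not} in $I+(x_n)$ (since $x^\alpha\notin I$ and $x_n\nmid x^\alpha$), so they cannot be generators. This does not damage the argument, because the correct generating set $\{M\in\G(I):\max M<n\}$ is visibly almost reverse lexicographic in $k[x_1,\dots,x_{n-1}]$ (any monomial of $S$ of the same degree and larger than such an $M$ lies in $I$, hence in $I'$ since it is not divisible by $x_n$); the paper records this via Proposition~\ref{Prop:EquivCondARL}. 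With that fix, and with the trivial cases $I=0$ and $n=1$ stated explicitly as base cases, your induction closes exactly as you outline.
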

   \begin{proof}
      If $I$ is the zero ideal in $R$, then $h^{(i)}_d = \binom{n-(i+1)+d}{d}$ for any $0 \le i \le n-1$. It follows that $D(\HB)=n-1$ and $h^{(n-1)}_{\bullet} = (1,1,1,\ldots)$. Hence $\HB$ is unimodal at each tail.

      Suppose that $I$ is a nonzero ideal in $R$, and that $\MO(I) = x^{\omega}$ for some $\omega = (\omega_1,\ldots,\omega_{\mu}) \in \ZZ^{\mu}_{\ge 0}$ with $\omega_{\mu} > 0$. By Proposition \ref{Prop:MinGenOfARL}, we can assume that the minimal system $\G$ of generators of $I$ is of the form given in {\rm(\ref{Set:MGS})}. If $\deg \MO = 1$, then $I$ is of the form $I= (x_1, \ldots, x_{\mu})$. In this case, $R/I$ is isomorphic to $k[x_{\mu+1},\ldots,x_{n}]$. So the sequence induced from $I$ is unimodal at each tail, as shown in the previous paragraph. Thus we may assume that $\deg \MO \ge 2$. We note that $n-{\mu} < h_1 \le n$ and $1 \le h_1$, because $I$ doesn't contain the monomials $x_{\mu}, x_{\mu+1}, \ldots, x_n$.

      For each $0 \le i < h_1$, we set $R^{(i)}=k[x_1,\ldots,x_{n-i}]$, and $I^{(i)}$ the ideal in $R^{(i)}$ generated by the set
      \[
           \G^{(i)} := \{ M \in \G(I) | \max M \le n-i \}.
      \]
      Then $I^{(i)}$ is an almost reverse lexicographic ideal in $R^{(i)}$ by Proposition \ref{Prop:EquivCondARL}. Furthermore $R^{(0)}/I^{(0)} = R/I$ and $R^{(i)}/I^{(i)}$ is isomorphic to $R/I+(x_n,\ldots,x_{n-i+1})$ for any $1 \le i < h_1$. Thus $R^{(i)}/(I^{(i)}+(x_{n-i}))$ is isomorphic to $R^{(i+1)}/I^{(i+1)}$ for any $0 \le i < h_1-1$. We note that the last generator of $I^{(i)}$ is
      \begin{equation}\label{Eqn:TheLastGenOFReducedIdeal}
          \MO^{(i)} := \MO(I^{(i)}) = \begin{cases}
                                       x^{\omega} = \MO(I) \ &\text{ if } 0 \le i \le n-{\mu}, \\
                                       x_{n-i}^{f_{n-i}(0)} \ &\text{ if } n-{\mu} + 1 \le i < h_1,
                                      \end{cases}
      \end{equation}
      by Corollary \ref{Cor:ARLisWLPandSLP}.

      Under the above setting, we will show that $\HIB$ is induced from the ideal $I^{(i)}$ for any $1\le i < h_1$, and that $\HIB$ is unimodal if $D(\HB) \le i < h_1$. Consider the following exact sequence,
      \begin{align*}
            0 \RA ((I^{(i)}:x_{n-i})&/I^{(i)})_{d-1} \RA \\
                 &(R^{(i)}/I^{(i)})_{d-1} \xrightarrow[\qquad]{\times x_{n-i}} (R^{(i)}/I^{(i)})_{d} \RA (R^{(i+1)}/I^{(i+1)})_{d} \RA 0.
      \end{align*}

      First we claim that if $0 \le i < n - {\mu}$, then $h^{(i)}_d \ge h^{(i)}_{d-1}$ and $H(R^{(i+1)}/I^{(i+1)},d) = h^{(i+1)}_d$ for any $d$. Suppose $0 \le i < n - \mu$. Then $\max \MO^{(i)} = \mu < n - i$ as shown in the equation {\rm(\ref{Eqn:TheLastGenOFReducedIdeal})}. Hence $x_{n-i}$ is $R^{(i)}/I^{(i)}$-regular. It follow that for any $d$
      \[
          H(R^{(i)}/I^{(i)},d) - H(R^{(i)}/I^{(i)},d-1) = H(R^{(i+1)}/I^{(i+1)},d) \ge 0.
      \]
      Since $h^{(0)}_d = H(R^{(0)}/I^{(0)},d)$, we can inductively show that for any $0 \le i < n-{\mu}$,
      \begin{equation}\label{Eqn:h_d-h_d-1}
           h^{(i)}_d = H(R^{(i)}/I^{(i)},d) \ge H(R^{(i)}/I^{(i)},d-1) = h^{(i)}_{d-1},
      \end{equation}
      and hence
      \begin{align*}
          h^{(i+1)}_d & = \max\{0, h^{(i)}_d - h^{(i)}_{d-1}\} = h^{(i)}_d - h^{(i)}_{d-1}  \\
                    & = H(R^{(i)}/I^{(i)},d) - H(R^{(i)}/I^{(i)},d-1)= H(R^{(i+1)}/I^{(i+1)},d).
      \end{align*}
      Until now we have shown that $\HIB$ is induced from the ideal $I^{(i)}$ if $0 \le i \le n-\mu$, and that $h^{(i)}_d \ge h^{(i)}_{d-1}$ for any $d$ if $0 \le i < n-\mu$.

      Suppose that $n-\mu \le i < h_1$. Then $R^{(i)}=k[x_1,\ldots,x_{n-i}]$ and $\max \MO^{(i)} = n-i$. Since the sequence $h^{(n-\mu)}_\bullet$ is induced from the ideal $I^{(n-\mu)}$, one can see by induction that for any $n-\mu \le i < h_1-1$,
      \begin{align*}
          H(R^{(i+1)}/I^{(i+1)}, d)
                          &= \max\{0, H(R^{(i)}/I^{(i)},d)-H(R^{(i)}/I^{(i)},d-1)\} \\
                          &= \max\{0,h^{(i)}_d-h^{(i)}_{d-1}\}  \\
                          &= h^{(i+1)}_d,
      \end{align*}
      where the first equality comes from Corollary \ref{Cor:HilbertF_of_ARL_1st} (4).

      We have to show that $\HIB$ is unimodal if $D(\HB) \le i < h_1$. To find out $D(\HB)$, consider the case that $i < n-\mu$. In this case $\max \MO^{(i)} = \max x^\omega = \mu < n-i$. Hence $x_{n-i}^d$ can't be contained in $I^{(i)}$. It follows that $h^{(i)}_d = H(R^{(i)}/I^{(i)},d) > 0$ for any $d$. Since $h^{(D(\HB)+1)}_{t} = 0$ where $t=r_{D(\HB)}$, $D(\HB)$ should be greater than or equal to $n-\mu-1$.

      Now suppose that $n-\mu \le i < h_1$. Then $R^{(i)}=k[x_1,\ldots,x_{n-i}]$ and $\max \MO^{(i)} = n-i$. By Corollary \ref{Cor:HilbertF_of_ARL_1st} (3), we have $r_i = f_{n-i-1}(0) < \infty$. It follows that $h^{(i)}_\bullet$ is unimodal by Corollary \ref{Cor:HilbertF_of_ARL_1st} (1) and (2).

      Therefore it is enough to show that our assertion holds even if $D(\HB) = n-\mu-1$, i.e.\ $\HIB$ is unimodal when $i=D(\HB)=n-\mu-1$. But in this case, we have already shown that $h^{(i)}_{d-1} \le h^{(i)}_d$ for any $d$ in {\rm(\ref{Eqn:h_d-h_d-1})}. So, we are done.
   \end{proof}

   Let $I \subset R=k[x_1,\ldots,x_n]$ be an almost reverse lexicographic ideal. Suppose that the last generator of $I$ is $\MO(I) = x_{n-1}^t$ for some $t > 0$, i.e.\ $f_{n-1}(0) = t$. Then we have the functions $f^{I}_i$ and the sets $\II^{I}_i$ which are defined for $I$. Now we will make a new almost reverse lexicographic ideal by adding some new generators into $\G(I)$. This method will play an important role in the following theorems. Let $T$ be the set given by
   \[
       T =   \left\{(\alpha_1,\ldots,\alpha_{n-2},\beta) \in \ZZ^{n-1}_{\ge 0}
                  \left| \begin{array}{l}
                         \alpha = (\alpha_1,\ldots,\alpha_{n-2}) \in \II_{n-2}, \\
                         0 \le \beta < f_{n-1}(\alpha)
                       \end{array}
                \right.  \right\}.
   \]
   Suppose that $s$ is a positive integer with $s \le |T|$. Choose $s$ largest elements in $T$, say $A_1, \ldots, A_s$ in $T$ such that $A_1 > A_2 > \cdots > A_s > M$ for any $M \in T - \{ A_1, \ldots, A_s \}$. Let $g:\{A_1,\ldots,A_s\} \RA \ZZ_{>0}$ be a function satisfying
   \begin{equation}\label{Eqn:Property_AuxFunc}
        f_{n-1}(0) = t \le |A_i| + g(A_i) \le |A_j| + g(A_{j}) \text{ for any } A_i > A_j.
   \end{equation}
   Under this circumstance, we have the following lemma.
   \begin{Lem}\label{Lem:Generating_ARL}
      Let $J$ be the ideal in $R$ generated by the set $\SSS := \G(I) \cup \{x^{A_i}x_{n}^{g(A_i)} | 1 \le i \le s \}$.
      Then $J$ is an almost reverse lexicographic ideal with $\G(J) = \SSS$.
   \end{Lem}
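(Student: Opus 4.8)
The plan is to proceed in two steps. First I would show that $\SSS$ is a minimal system of generators, so that $\G(J)=\SSS$. Then I would verify directly that $J$ satisfies the condition of Definition~\ref{Def:ARL_Ideal}: for every monomial $M$ and every minimal generator $N$ of $J$ with $\deg M=\deg N$ and $M>N$, one has $M\in J$. Once this is done, strong stability of $J$ follows automatically from the discussion following Definition~\ref{Def:ARL_Ideal}, so it need not be checked separately. Throughout I will use that $I$ is strongly stable, that $\G(I)$ is described by Proposition~\ref{Prop:MinGenOfARL}, and in particular that every element of $\G(I)$ has $\max\le n-1$ since $\MO(I)=x_{n-1}^{t}$.

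For the minimality step I would check that no monomial in $\SSS$ properly divides another. The elements of $\G(I)$ are already pairwise non-dividing. A generator $x^{A_i}x_n^{g(A_i)}$ is a multiple of $x_n$ while no element of $\G(I)$ is, so it cannot divide an old generator; conversely, if $G\in\G(I)$ divided $x^{A_i}x_n^{g(A_i)}$, then $G$ would divide $x^{A_i}$, so $x^{A_i}\in I$, contradicting $A_i\in T$ via the definition of $f_{n-1}^{I}$ in~(\ref{Eqn:f's}). Finally, if $x^{A_i}x_n^{g(A_i)}\mid x^{A_j}x_n^{g(A_j)}$ with $i\ne j$, then $x^{A_i}\mid x^{A_j}$, and since the $A_\ell$ are distinct this forces $|A_i|<|A_j|$, hence $\deg x^{A_i}<\deg x^{A_j}$ and so $A_i<A_j$ in the degree reverse lexicographic order, i.e.\ $j<i$; then~(\ref{Eqn:Property_AuxFunc}) applied to $A_j>A_i$ yields $g(A_j)<g(A_i)$, contradicting $g(A_i)\le g(A_j)$. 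Hence $\G(J)=\SSS$.

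For the second step, let $N\in\G(J)=\SSS$ and let $M$ be a monomial with $\deg M=\deg N$ and $M>N$. If $N\in\G(I)$, then $M\in I\subseteq J$ because $I$ is almost reverse lexicographic. Otherwise $N=x^{A_i}x_n^{g(A_i)}$; write $M=x^{\gamma}x_n^{c}$ with $\gamma=(\gamma_1,\dots,\gamma_{n-1})$. If $\gamma\notin T$, then by the definitions of $\II_{n-2}$ and $f_{n-1}^{I}$ some product $x_1^{\gamma_1}\cdots x_j^{\gamma_j}$ with $j\le n-1$ already lies in $I$, so $x^{\gamma}\in I$ and $M\in I\subseteq J$. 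So suppose $\gamma\in T$. Since $x_n$ is the last variable in the term order and $M>N$, the exponent $c$ cannot exceed $g(A_i)$; if $c<g(A_i)$ then $|\gamma|=\deg M-c>\deg N-g(A_i)=|A_i|$, so $\deg x^{\gamma}>\deg x^{A_i}$ and hence $x^{\gamma}>x^{A_i}$, while if $c=g(A_i)$ then $x^{\gamma}>x^{A_i}$ directly — in either case $\gamma>A_i$ in the order on $\ZZ^{n-1}_{\ge0}$. Since $A_i$ is among the $s$ largest elements $A_1,\dots,A_s$ of $T$ and $\gamma\in T$ exceeds it, $\gamma$ is itself one of them, say $\gamma=A_j$ with $j<i$. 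Now $\deg M=|A_j|+c$ and $\deg N=|A_i|+g(A_i)$ are equal, so~(\ref{Eqn:Property_AuxFunc}) applied to $A_j>A_i$ gives $|A_j|+g(A_j)\le|A_i|+g(A_i)=|A_j|+c$, i.e.\ $g(A_j)\le c$. Therefore $x^{A_j}x_n^{g(A_j)}$ divides $x^{A_j}x_n^{c}=M$, and $M\in J$, as required.

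The step that requires the most care is the last one: knowing only that $\gamma$ is some element of $T$ lying above $A_i$, one must argue that $\gamma$ is one of the \emph{selected} top-$s$ elements $A_1,\dots,A_s$ — this is precisely where the choice of $A_1,\dots,A_s$ as the $s$ largest elements of $T$ is used — and then reconcile the resulting $x_n$-exponents through~(\ref{Eqn:Property_AuxFunc}). A secondary point is the degree comparison in the subcase $c<g(A_i)$, where one passes through $|\gamma|>|A_i|$ before concluding $\gamma>A_i$. The remaining parts — the minimality step and the case $\gamma\notin T$ — are direct unwindings of the definitions and of Proposition~\ref{Prop:MinGenOfARL}.
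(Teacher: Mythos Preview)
Your proof is correct, but it takes a different route from the paper's own argument. The paper proceeds by first verifying that $J$ is strongly stable (checking the condition $x_iM\in J\Rightarrow x_jM\in J$ on the new generators, using~(\ref{Eqn:Property_AuxFunc}) through a case split on whether $i<n$ or $i=n$), and then invokes Proposition~\ref{Prop:EquivCondARL} after observing that $f^{J}_i=f^{I}_i$ and $\II^{J}_i=\II^{I}_i$ for $i\le n-1$ while $f^{J}_n=g$ and $\II^{J}_{n-1}=\{A_1,\dots,A_s\}$. You instead verify Definition~\ref{Def:ARL_Ideal} directly: given $N=x^{A_i}x_n^{g(A_i)}$ and $M=x^{\gamma}x_n^{c}>N$ of the same degree, you locate $\gamma$ inside $T$, use the extremality of the chosen $A_1,\dots,A_s$ to force $\gamma=A_j$ with $j<i$, and finish with~(\ref{Eqn:Property_AuxFunc}). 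Your approach is more elementary in that it bypasses Proposition~\ref{Prop:EquivCondARL} entirely (and gets strong stability for free afterwards), at the cost of re-deriving by hand the comparison of $x_n$-exponents that the paper packages into that criterion; the paper's approach, in turn, makes the structural identities $f^{J}_i,\II^{J}_i$ explicit, which is convenient for the later applications in Section~3.
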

   \begin{proof}
     By the choice of the $A_i$'s, every monomial in the set $\SSS$ is a minimal generator of $J$. Indeed, there is no monomial $M$ in $\G(I)$ such that $M$ divides $x^{A_i}x_n^{g(A_i)}$ for any $i$. Hence we have $\G(J) = \SSS$. First we claim that the ideal $J$ is strongly stable. We have to show if $N$ is a monomial in $J$ such that $x_i$ divides $N$, then $x_j(N/x_i) \in J$ for any $j < i$. It suffices to show only when $N$ is a minimal generator of $J$. Since $I$ is almost reverse lexicographic, for the monomials $N$ in $\G(I)$ we have $x_j(N/x_i) \in I \subset J$. Hence it is enough to show that the condition is also satisfied for the new generators $N=x^{A_l}x_{n}^{g(A_l)}$ of $J$. Suppose that $A_l = (\alpha_1,\ldots,\alpha_{n-1})$. There are two possibilities:(1) $i < n$ and $\alpha_i > 0$, in this case let $\delta = (\alpha_1,\ldots,\alpha_j+1,\ldots,\alpha_i-1,\ldots,\alpha_{n-1})$ or (2) $i=n$, in this case we set $\delta = (\alpha_1,\ldots,\alpha_j+1,\ldots,\alpha_{n-1})$. We may assume that $\delta \in T$. Otherwise, $x^{\delta} \in I \subset J$ by the definition of $T$, so nothing is left to show. By the assumption, it follows that $x^{\delta}x_{n}^{g(\delta)} \in J$, since $\delta > A_l$. Now for the case (1), we have
     \[
        g(\delta) \le g(A_l) + |A_l| - |\delta| = g(A_l).
     \]
     Hence $x_j(N/x_i) = x_j(x^{A_l}x_{n}^{g(A_l)}/x_i) = x^{\delta}x_{n}^{g(A_l)} \in J$. For the case (2), we have
     \[
        g(\delta) \le g(A_l) + |A_l| - |\delta| = g(A_l) - 1.
     \]
     Hence $x_j(N/x_n) = x_j(x^{A_l}x_{n}^{g(A_l)}/x_n) = x^{\delta}x_n^{g(A_l) - 1} \in J$. It follows that $J$ is strongly stable. Furthermore, $J$ satisfies the condition in Proposition \ref{Prop:EquivCondARL}, because we have
     \[
        f^{J}_i = \begin{cases}
                       f^{I}_i, &\text{ if } 1 \le i \le n-1, \\
                       g, &\text{ if } i = n,
                  \end{cases}
        \text{ and }
        \II^{J}_i = \begin{cases}
                       \II^{I}_i, &\text{ if } 1 \le i \le n-2, \\
                       \{A_1,\ldots,A_s\}, &\text{ if } i = n-1.
                    \end{cases}.
     \]
     Hence $J$ is an almost reverse lexicographic ideal in $R$ with $\G(J) = \SSS$.
   \end{proof}

   Before we introduce the main theorem of this section, consider the following proposition. It was introduced by Green in the paper \cite{Gr} for the case that $I$ defines a point set in $\PP^2$. Then Cho et al.\ \cite{CCP} generalized his results to the case that $I$ defines an Arithmetically Cohen-Macaulay closed subscheme of any dimension $r$ in $\PP^n$. They assumed (1) that there is no generator $M$ of $I$ with $\max M > n-r$, and (2) that a given ideal is strongly stable. But in our case, we assume that a given ideal is almost reverse lexicographic, because that is enough to use in the rest of this paper.

   \begin{Prop}\cite{CCP,Gr}\label{Prop:Deg}
     Let $I$ be an almost reverse lexicographic ideal in the polynomial ring $R=k[x_1,\ldots,x_n]$. Suppose that the last generator of $I$ is $\MO =x_{n-1}^{t}$ for some positive integer $t > 0$. If $I$ has the minimal system of generators of the form given in {\rm(\ref{Set:MGS})} with $\mu = n-1$, then the Hilbert function of $R/I$ is
     \[
         H(R/I,d) = \sum_{\alpha \in \II_{n-2}} f_{n-1}(\alpha)
     \]
     for any $d \ge t = f_{n-1}(0)$.
   \end{Prop}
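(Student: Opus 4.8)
The plan is to compute $H(R/I,d)$ by counting the monomials of degree $d$ that do not lie in $I$. Since $\mu=n-1$ and $\G$ has the shape $(\ref{Set:MGS})$, no minimal generator of $I$ involves the variable $x_n$; hence a generator divides a monomial $m=x_1^{c_1}\cdots x_n^{c_n}$ exactly when it divides its truncation $x_1^{c_1}\cdots x_{n-1}^{c_{n-1}}$. Writing each degree-$d$ monomial uniquely as $x^{\beta}x_n^{c}$ with $\beta\in\ZZ^{n-1}_{\ge 0}$, $c\ge 0$ and $|\beta|+c=d$, it follows that $H(R/I,d)$ equals the number of such pairs with $x^{\beta}\notin I$.

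The key preliminary fact is that $x^{\beta}\notin I$ forces $|\beta|<t$. Indeed, $\MO=x_{n-1}^{t}\in I$ and $I$ is strongly stable, so starting from $x_{n-1}^{e}\in I$ for any $e\ge t$ and repeatedly replacing a factor $x_{n-1}$ by a factor $x_j$ with $j<n-1$ produces, while remaining inside $I$, any prescribed monomial of degree $e$ in $x_1,\ldots,x_{n-1}$; thus every monomial of degree $\ge t$ in $x_1,\ldots,x_{n-1}$ lies in $I$. Consequently, for $d\ge t$ the assignment $x^{\beta}\mapsto x^{\beta}x_n^{\,d-|\beta|}$ is a well-defined bijection from $\{\,x^{\beta}:\beta\in\ZZ^{n-1}_{\ge 0},\ x^{\beta}\notin I\,\}$ onto the set of degree-$d$ monomials of $R$ not lying in $I$, the inverse simply deleting the $x_n$-part. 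Hence $H(R/I,d)$ is independent of $d$ for $d\ge t$ and equals $N:=|\{\,x^{\beta}:\beta\in\ZZ^{n-1}_{\ge 0},\ x^{\beta}\notin I\,\}|$.

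It remains to identify $N$ with $\sum_{\alpha\in\II_{n-2}}f_{n-1}(\alpha)$. If $x^{\beta}\notin I$ with $\beta=(\beta_1,\ldots,\beta_{n-1})$, then each initial product $x_1^{\beta_1}\cdots x_j^{\beta_j}$ divides $x^{\beta}$ and so is not in $I$, whence $\beta_j<f_j(\beta_1,\ldots,\beta_{j-1})$ for every $1\le j\le n-1$ by the definition of $f_j$; that is, $(\beta_1,\ldots,\beta_{n-2})\in\II_{n-2}$ and $0\le\beta_{n-1}<f_{n-1}(\beta_1,\ldots,\beta_{n-2})$. Conversely, $\beta_{n-1}<f_{n-1}(\beta_1,\ldots,\beta_{n-2})$ means, by the very definition of $f_{n-1}$, that $x^{\beta}\notin I$. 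Therefore the set counted by $N$ is precisely $\{\,x^{\beta}:(\beta_1,\ldots,\beta_{n-2})\in\II_{n-2},\ 0\le\beta_{n-1}<f_{n-1}(\beta_1,\ldots,\beta_{n-2})\,\}$, and since $\II_{n-2}=\II_{\mu-1}$ is finite and each $f_{n-1}(\alpha)$ is finite (Lemmas~\ref{Lem:NumberOflastGens} and~\ref{Lem:Property_Of_F's_Ext}), summing over $\alpha\in\II_{n-2}$ gives $N=\sum_{\alpha\in\II_{n-2}}f_{n-1}(\alpha)$. Since $\MO=x_{n-1}^{t}$ corresponds in $(\ref{Set:MGS})$ to $\alpha=0\in\II_{n-2}$, we have $t=f_{n-1}(0)$, so $H(R/I,d)=\sum_{\alpha\in\II_{n-2}}f_{n-1}(\alpha)$ holds for all $d\ge t$.

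The only point needing genuine care is the strong-stability argument in the second paragraph, namely writing down the explicit chain of elementary Borel moves carrying $x_{n-1}^{e}$ to an arbitrary degree-$e$ monomial in $x_1,\ldots,x_{n-1}$ (replace $\beta_1$ of the $x_{n-1}$-factors by $x_1$, then $\beta_2$ of the remaining ones by $x_2$, and so on, each step staying in $I$). Everything else is routine bookkeeping with the functions $f_i$ and the sets $\II_i$, and I foresee no other obstacle.
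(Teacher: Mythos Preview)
Your proof is correct, but the paper argues quite differently. The paper constructs an auxiliary almost reverse lexicographic ideal $J\supset I$ by adding, for every $\beta$ in the set $T=\{(\alpha,s):\alpha\in\II_{n-2},\ 0\le s<f_{n-1}(\alpha)\}$, a new generator $x^{\beta}x_n^{t+1-|\beta|}$ of degree $t+1$; Lemma~\ref{Lem:Generating_ARL} guarantees $J$ is almost reverse lexicographic. Since $x_n^{t+1}\in J$, one has $H(R/J,t+1)=0$, and Corollary~\ref{Cor:HilbertF_of_ARL_1st}(1) then gives $H(R/I,t)=H(R/J,t)=H(R/J,t)-H(R/J,t+1)=|T|=\sum_{\alpha}f_{n-1}(\alpha)$. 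Constancy for $d\ge t$ is deduced from the exact sequence for multiplication by the regular element $x_n$. Your route, by contrast, bypasses the auxiliary ideal entirely and counts directly: the set $T$ is exactly the index set of monomials in $x_1,\ldots,x_{n-1}$ not in $I$, and for $d\ge t$ padding with $x_n^{\,d-|\beta|}$ bijects this onto the degree-$d$ monomials outside $I$. Your argument is shorter and more elementary; the paper's argument has the virtue of exercising Lemma~\ref{Lem:Generating_ARL} and Corollary~\ref{Cor:HilbertF_of_ARL_1st}, which are the same tools driving the constructive proof of Theorem~\ref{Thm:UAET_Imp_ARL_IDEAL}. One small remark: your appeal to $\II_{n-2}=\II_{\mu-1}$ is fine here precisely because $\MO=x_{n-1}^t$ makes $(\omega_1,\ldots,\omega_{\mu-1})=0$, so the extra inequality in the definition of $\II_{\mu-1}$ is vacuous.
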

   \begin{proof}
    We let
    \[
       T =   \left\{(\alpha_1,\ldots,\alpha_{n-2},s) \in \ZZ^{n-1}_{\ge 0}
                  \left| \begin{array}{l}
                         \alpha = (\alpha_1,\ldots,\alpha_{n-2}) \in \II_{n-2}, \\
                         0 \le s < f_{n-1}(\alpha)
                       \end{array}
                \right.  \right\}.
    \]
    Define the function $g:T \RA \ZZ_{>0}$ by $g(\beta) = t+1-|\beta|$ for $\beta \in T$. Then the function $g$ satisfies the condition in {\rm(\ref{Eqn:Property_AuxFunc})}. By Lemma \ref{Lem:Generating_ARL}, the ideal $J$ in $R$ generated by the set $\SSS = \G(I) \cup \{ x^{\beta} x_n^{g(\beta)} | \beta \in T \}$ is almost reverse lexicographic. We note that if $M$ is a minimal generator of $J$ with $\max M = n$, then $\deg M = t+1$, in particular $\MO(J) = x_n^{t+1}$. Since $J$ is strongly stable, this implies that $H(R/J,t+1) = 0$. By Corollary \ref{Cor:HilbertF_of_ARL_1st}, it follows that
    \begin{align*}
        H(R/I,t) &= H(R/J,t) = H(R/J,t) - H(R/J,t+1) \\
                 &= |\{M \in \G(J) | \max M = n, \deg M = t+1 \}| \\
                 &= |T| = \sum_{\alpha \in \II^{I}_{n-2}} f^{I}_{n-1}(\alpha),
    \end{align*}
    since $f^{J}_{n-1}(0) = f^{I}_{n-1}(0) = t$. On the other hand, because $x_n$ is $R/I$-regular, we have an exact sequence
    \[
        0 \RA (R/I)_{d} \xrightarrow[\qquad]{\times x_{n}} (R/I)_{d+1} \RA (R/I+(x_n))_{d+1} \RA 0.
    \]
    Thus $H(R/I,d+1) = H(R/I,d) + H(R/I+(x_n),d+1)$.  Since $(R/I+(x_n))_{d+1} = 0$ for any $d \ge t$, the assertion follows.
   \end{proof}

   This is the main theorem of this section.

   \begin{Thm}\label{Thm:UAET_Imp_ARL_IDEAL}
      Let $\HB = (h_0, h_1, h_2 , \ldots)$ be a sequence of nonnegative integers. If $\HB$ is unimodal at each tail, then $\HB$ is induced from an almost reverse lexicographic ideal $I$ in the polynomial ring $R=k[x_1,\ldots,x_n]$ where $n = h_1$ if $h_1 \ge 1$, and $n=1$ if $h_1 = 0$.
   \end{Thm}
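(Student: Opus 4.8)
The plan is to proceed by induction on $h_1$, reducing the statement for a sequence $\HB$ with $h_1 = n$ to the statement for the derived sequence $h^{(1)}_\bullet$, which has $(h^{(1)}_\bullet)_1 = n-1$. The base cases are $h_1 \le 1$, where $\HB$ is either $(1,0,0,\ldots)$, $(1,1,\ldots,1,0,\ldots)$ (truncated by $r_0 = 1$), or $(1,1,1,\ldots)$; in $R = k[x_1]$ these are realized respectively by the ideals $(x_1)$, $(x_1^{r})$ for the appropriate $r$, and $(0)$, all of which are (trivially) almost reverse lexicographic.

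For the inductive step, suppose $h_1 = n \ge 2$ and that $\HB$ is unimodal at each tail. First I would observe that $h^{(1)}_\bullet$ is again unimodal at each tail: since the operation $h^{(i)}_\bullet \mapsto h^{(i+1)}_\bullet$ is exactly the one defining the derived sequences, $D(h^{(1)}_\bullet) = \max\{0, D(\HB)-1\}$ and the tails of $h^{(1)}_\bullet$ are among the tails of $\HB$, so unimodality is inherited. By the inductive hypothesis there is an almost reverse lexicographic ideal $I'$ in $R' = k[x_1,\ldots,x_{n-1}]$ with $H(R'/I',d) = h^{(1)}_d$ for all $d$ (with $h^{(1)}_1 = n-1$, matching the variable count). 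The key step is then to lift $I'$ to an ideal $I$ in $R = k[x_1,\ldots,x_n]$ by first forming $I' R$ and then adjoining a carefully chosen set of new generators whose top variable is $x_n$, using the machinery of Lemma~\ref{Lem:Generating_ARL}. Concretely, because $x_n$ acts as a nonzerodivisor on $R/I'R$, we have $H(R/I'R,d) = \sum_{j=0}^{d} h^{(1)}_j$, which typically overshoots $h_d$; the new $x_n$-generators must kill exactly the excess, i.e.\ we need the minimal generators $M$ of the final ideal with $\max M = n$ and $\deg M = d$ to number $h_{d-1} - h_d$ in the range $d \ge r_0$ (where $\HB$ is nonincreasing), which is precisely what Corollary~\ref{Cor:HilbertF_of_ARL_1st}(1) demands, while for $d < r_0$ there should be none, matching Corollary~\ref{Cor:HilbertF_of_ARL_1st}(2) and the strict increase of $\HB$ there.

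To carry this out I would handle two cases according to whether $\HB$ is eventually zero or not. If $\HB$ has finite support, then $\max \MO(I')$ should be forced to be $n-1$ (i.e.\ $x_{n-1}^t \in I'$ for $t = r_0$), and I would apply Lemma~\ref{Lem:Generating_ARL} with $s = |T|$ and the function $g(\beta) = r_0 + 1 - |\beta|$ exactly as in the proof of Proposition~\ref{Prop:Deg}, possibly iterating: each application of Lemma~\ref{Lem:Generating_ARL} adds one ``level'' of $x_n$-generators of a fixed degree, and stacking these builds up the full tail of $\HB$ degree by degree; at each stage one checks via Corollary~\ref{Cor:HilbertF_of_ARL_1st}(4) that the Hilbert function drops by the prescribed amount. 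If $\HB$ is not eventually zero, then $h^{(1)}_\bullet$ stabilizes and $x_n$ should simply remain a nonzerodivisor on $R/I$ past a certain degree, so fewer (or no) top generators are needed beyond that point; the bound $|\II_{\mu-1}| \le \sum_{\alpha \in \II_{\mu-2}} f_{\mu-1}(\alpha)$ from Lemma~\ref{Lem:NumberOflastGens} is what guarantees that the number of $x_n$-generators we are required to insert at the critical degree never exceeds $|T|$, so Lemma~\ref{Lem:Generating_ARL} is applicable. Throughout, verifying that the chosen $g$ satisfies the monotonicity condition~\eqref{Eqn:Property_AuxFunc} reduces, via the unimodality of the relevant tail $h^{(i)}_\bullet$, to the statement that consecutive differences $h_{d-1} - h_d$ are nondecreasing once $\HB$ starts decreasing — which is a restatement of $h^{(2)}_\bullet \ge 0$ together with unimodality.

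The main obstacle I anticipate is the bookkeeping in the non-Artinian case: when $\HB$ is not eventually zero one must show that after adjoining the top-degree generators the resulting ideal still has its last generator's max-index and the sets $\II^I_{n-1}$ controlled well enough that Corollary~\ref{Cor:HilbertF_of_ARL_1st} applies in the correct degree range, and that the ``unimodal at each tail'' hypothesis for $\HB$ is exactly strong enough — no more, no less — to make the function $g$ on the chosen top elements $A_1 > \cdots > A_s$ satisfy~\eqref{Eqn:Property_AuxFunc}. Matching the combinatorial count $h_{d-1}-h_d$ against $|\{N \in \G : \max N = n, \deg N = d\}|$ degree by degree, and checking that the $s$ largest elements of $T$ are precisely the ones that need $x_n$-generators of the smallest $x_n$-exponent, is where the argument will be most delicate; everything else is an assembly of Lemma~\ref{Lem:Generating_ARL}, Corollary~\ref{Cor:HilbertF_of_ARL_1st}, and Proposition~\ref{Prop:Deg}.
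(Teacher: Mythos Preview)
Your overall strategy matches the paper's proof almost exactly: induction on $h_1$, realize $h^{(1)}_\bullet$ by an almost reverse lexicographic ideal $I'$ in $k[x_1,\ldots,x_{n-1}]$, extend to $R=k[x_1,\ldots,x_n]$, and then iteratively adjoin $x_n$-generators via Lemma~\ref{Lem:Generating_ARL}, using Corollary~\ref{Cor:HilbertF_of_ARL_1st} and Proposition~\ref{Prop:Deg} to control the Hilbert function at each stage. The paper's case split is $r_0=\infty$ versus $r_0<\infty$ (rather than finite versus infinite support), but this is essentially the same dichotomy, and the iterative process the paper runs is exactly your ``degree-by-degree'' stacking, terminating because the pool $T$ of available exponent vectors shrinks strictly.

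There is one genuine confusion worth correcting. You claim that verifying condition~\eqref{Eqn:Property_AuxFunc} for the function $g$ reduces to the second differences $h_{d-1}-h_d$ being nondecreasing, i.e.\ to concavity of $\HB$ past $r_0$. This is not what ``unimodal at each tail'' gives you, and it is not what is needed. In the paper's construction, at step $i$ one selects the $t_i$ largest remaining elements $A_1>\cdots>A_{t_i}$ of $T_i$ and sets $g(A_j)=d_i-|A_j|$, so $|A_j|+g(A_j)=d_i$ is constant within a step; between steps the degrees $d_i$ are increasing by construction, and larger elements of $T$ are always consumed first. Hence \eqref{Eqn:Property_AuxFunc} holds automatically from the order in which generators are added, with no appeal to second differences. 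The only place unimodality of $\HB$ itself enters is to guarantee $h_d\le h_{d-1}$ for $d\ge r_0$, so that $t_i=H(R/K_i,d_i)-h_{d_i}\ge 0$ and does not exceed $|T_i|$. If you tried to verify \eqref{Eqn:Property_AuxFunc} via concavity you would get stuck, since the hypothesis does not supply it; the fix is simply to observe that the iterative scheme makes the condition tautological.
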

   \begin{proof}
      We do induction on $h_1$. Suppose $h_1 \le 1$. Then $r_0 = 1$ because $h_0 = 1$. This implies that the sequence $\HB$ is either $\HB=(1,1,1,\ldots)$ or $\HB=(1,1,\ldots,1,0,0,\ldots)$. For the first case choose $I = 0$, and for the second choose $I = (x_1^{f_1})$, respectively, in $R=k[x_1]$, where $f_1 = \min \{ t | h_t = 0 \}$. Then the assertion follows.

      For general case, suppose that $\HB$ is unimodal at each tail and $h_1 > 1$. Since $h^{(1)}_\bullet$ is also unimodal at each tail, there is an almost reverse lexicographic ideal $J$ in $S=k[x_1,\ldots,x_{n-1}]$ such that $n-1 = h^{(1)}_1$ and $h^{(1)}_d = H(S/J,d)$ for all $d$, by the induction hypothesis. Then $n = h_1$. Set $I$ the ideal generated by $\G(J)$ in the ring $R=k[x_1,\ldots,x_n]$. Then $I$ is also almost reverse lexicographic by Proposition \ref{Prop:EquivCondARL}. Furthermore $R/I+(x_n)$ is isomorphic to $S/J$. Since $x_n$ is $R/I$-regular, we have an exact sequence
      \[
          0 \RA (R/I)_{d-1} \xrightarrow[\qquad]{\times x_{n}} (R/I)_{d} \RA (S/J)_{d} \RA 0.
      \]
      It follows that $H(R/I,d) - H(R/I,d-1) = H(S/J,d) = h^{(1)}_d$ for all $d \ge 1$. This shows that $H(R/I,d) \ge H(R/I,d-1)$ for all $d \ge 1$. Hence for any $d < r_0$,
      \begin{align*}
          h_d &= \sum_{i=1}^{d} (h_i - h_{i-1}) + h_0 = \sum_{i=1}^{d} h^{(1)}_i + h_0\\
           &= \sum_{i=1}^{d} (H(R/I,i) - H(R/I,i-1)) + H(R/I,0) = H(R/I,d),
      \end{align*}
      because $H(R/I,0) = 1 = h_0$. If $r_0 = \infty$, then we are done, i.e.\ the sequence $\HB$ is induced from the almost reverse lexicographic ideal $I$ in $R=k[x_1,\ldots,x_n]$ with $n=h_1$.

      Thus we may assume $r_0 < \infty$. Then $D(\HB)=0$ and $H(S/J,r_0) = h^{(1)}_{r_0} = 0$ by the definition of $r_0$. Since $\HB$ is unimodal at each tail, $h_d \le h_{d-1}$ for any $d \ge r_0$. Furthermore, it also follows that $x_{n-1}^{r_0} \in \G(J)$. Indeed, from $H(S/J,r_0) = h^{(1)}_{r_0} = 0$, we have $x_{n-1}^{r_0} \in J$. Hence, $f_{n-1}(0) \le r_0$. Furthermore if $d < r_0$, then $H(S/J,d) = h^{(1)}_d > 0$ by the definition of $r_0$. It follows that $x_{n-1}^d \notin J$ if $d < r_0$. Therefore $f_{n-1}(0) \ge r_0$. This shows that $r_0 = f_{n-1}(0)$, and hence $x_{n-1}^{r_0} \in \G(J) = \G(I)$. By Proposition \ref{Prop:Deg}, then we have $H(R/I,d) = \sum_{\alpha \in \II_{n-2}} f_{n-1}(\alpha)$ for any $d \ge r_0$. Recall that $H(R/I,d) \ge H(R/I,d-1)$ for any $d \ge 1$, and that $H(R/I,d) = h_d$ for any $d \le r_0-1$. Summing up, we have
      \[
         H(R/I,d) = H(R/I,r_0) \ge H(R/I,r_0-1) = h_{r_0 - 1} \ge h_{r_0},
      \]
      for any $d \ge r_0$, where the last inequality follows from the definition of $r_0$.

      Using the method in Lemma \ref{Lem:Generating_ARL}, we will construct an almost reverse lexicographic ideal $K$ in $R$ such that $\HB$ is induced from $K$. First we set $i:=0$, $K_i := I$, $d_{i-1} := r_0$, $\II^{(i)} := \emptyset$ and
      \[
          T_i :=   \left\{(\alpha_1,\ldots,\alpha_{n-2},\alpha_{n-1}) \in \ZZ^{n-1}_{\ge 0}
                     \left| \begin{array}{l}
                            \alpha = (\alpha_1,\ldots,\alpha_{n-2}) \in \II_{n-2}, \\
                            0 \le \alpha_{n-1} < f_{n-1}(\alpha)
                          \end{array}
                   \right.  \right\}.
      \]
      Then by Proposition \ref{Prop:Deg}, $|T_i| = H(R/K_i,d_{i-1}) = H(R/K_i,d)$ for all $d \ge d_{i-1}$.

      Now start the process: Note that $h_d = H(R/K_i,d)$ for all $d < d_{i-1}$. Since $\HB$ is unimodal at each tail, for any $d \ge d_{i-1} \ge r_0$, it follows that
      \[
          H(R/K_i,d) = H(R/K_i,d_{i-1}) \ge h_{d_{i-1}} \ge h_d.
      \]
      We set $d_i = \min\{d | d \ge d_{i-1}, H(R/K_i,d) > h_d \}$. If $d_i = \infty$, then the almost reverse lexicographic ideal $K_i$ is the ideal we want to construct. Suppose that $d_i < \infty$. Let $t_i = H(R/K_i,d_i) - h_{d_i}$. Choose $t_i$ largest elements in $T_i$, say $A_1 > \ldots > A_{t_i}$. Since $|T_i| = H(R/K_i,d_{i-1}) = H(R/K_i,d_i) \ge t_i$, one can always choose such $t_i$ elements from $T_i$. For each $1 \le j \le t_i$, define $g(A_{j}) = d_i - |A_{j}|$. Let $K_{i+1}$ be the ideal generated by
      \[
          \G(K_{i+1}) := \G(K_i) \cup \{ x^{A_{j}} x_{n}^{g(A_{j})} | 1 \le j \le t_i \}.
      \]
      Set $\II^{(i+1)} := \II^{(i)} \cup \{A_{1}, \ldots, A_{t_i} \}$, and set $T_{i+1} := T_{i} - \{A_1,\ldots,A_{t_i}\}$. To show that $K_{i+1}$ is almost reverse lexicographic, we must prove that $g$ satisfies the condition in {\rm(\ref{Eqn:Property_AuxFunc})}, i.e.\ if $A,B$ are elements in $\II^{(i+1)}$ with $A > B$, then $r_0 \le |A| + g(A) \le |B| + g(B)$. But, by the definition of the $\II^{(i)}$'s, it is enough to show that if $B \in \II^{(i)}-\II^{(i-1)}$, then $r_0 \le |B| + g(B) \le |A_j| + g(A_j) \le |A_{j+1}| + g(A_{j+1})$ for any $1 \le j \le t_i-1$. Indeed, since we have
      \begin{align*}
        f_{n-1}(0) = r_0 &\le d_{i-1} = |B| + g(B) \\
            &\le d_{i} = |A_j| + g(A_j) = |A_{j+1}| + g(A_{j+1}),
      \end{align*}
      it follows from Lemma \ref{Lem:Generating_ARL} that the ideal $K_{i+1}$ is almost reverse lexicographic. By the choice of $K_{i+1}$ and by Corollary \ref{Cor:HilbertF_of_ARL_1st} (1), we have
      \[
          H(R/K_{i+1},d) = \begin{cases}
                               H(R/K_{i},d) = h_d & \text{ if } d < d_i, \\
                               h_{d_i} & \text{ if } d \ge d_i.
                           \end{cases}
      \]
      Furthermore $|T_{i+1}| = |T_i| - t_i = |T_i| - (H(R/K_i,d_i) - h_{d_i}) = h_{d_i} = H(R/K_{i+1},d_{i})$. Increase $i$ by $1$, then repeat this process.

      We have to show that this process stops in a finite number of steps. Note that $|T_i| < |T_{i-1}|$ for any $1 \le i$. Since $|T_0| < \infty$, the process must be terminated in a finite number of steps.
   \end{proof}

   In next section we will use this theorem to prove each Fr\"{o}berg sequence is induced from an almost reverse lexicographic ideal. Putting Proposition \ref{Prop:ArlImpDecT} and Theorem \ref{Thm:UAET_Imp_ARL_IDEAL} together, we have the following criterion.

   \begin{Coro}\label{Coro:Criterion_For_ARL_UAET}
      For a sequence $\HB$ of nonnegative integers, $\HB$ is unimodal at each tail if and only if there is an almost reverse lexicographic ideal in a polynomial ring whose Hilbert function is given by the sequence $\HB$. $\qed$
   \end{Coro}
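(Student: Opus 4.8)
The plan is to obtain Corollary~\ref{Coro:Criterion_For_ARL_UAET} by splicing together the two main results of this section, handling each implication on its own; no new argument is needed beyond what has already been proved.

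For the direction ``unimodal at each tail $\Rightarrow$ realizable,'' I would simply invoke Theorem~\ref{Thm:UAET_Imp_ARL_IDEAL}: if $\HB$ is unimodal at each tail (so in particular $h_0 = 1$, as is implicit in Definition~\ref{Def:UniTail}), then that theorem constructs an almost reverse lexicographic ideal $I$ in $R = k[x_1,\ldots,x_n]$, with $n = h_1$ when $h_1 \ge 1$ and $n = 1$ when $h_1 = 0$, whose Hilbert function is $\HB$. That is exactly the assertion that $\HB$ is induced from an almost reverse lexicographic ideal in a polynomial ring.

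For the converse direction ``realizable $\Rightarrow$ unimodal at each tail,'' I would invoke Proposition~\ref{Prop:ArlImpDecT}: if $I \subset R = k[x_1,\ldots,x_n]$ is almost reverse lexicographic (allowing $I = 0$, which by the convention fixed just before Proposition~\ref{Prop:ArlImpDecT} counts as almost reverse lexicographic) and its Hilbert function is $\HB$, then $\HB$ is unimodal at each tail. The one mild point to note is the standing hypothesis $h_0 = 1$ attached to Definition~\ref{Def:UniTail} and to the statements being cited; since $H(R/I,0) = 1$ for every proper homogeneous ideal, this is automatically consistent and no sequence with $h_0 \ne 1$ appears on either side of the equivalence. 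There is no real obstacle here: the corollary is a direct consequence of Proposition~\ref{Prop:ArlImpDecT} together with Theorem~\ref{Thm:UAET_Imp_ARL_IDEAL}, so the proof amounts to the two citations above.
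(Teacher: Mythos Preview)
Your proposal is correct and matches the paper's own treatment exactly: the corollary is stated with a $\qed$ and is explicitly introduced as the combination of Proposition~\ref{Prop:ArlImpDecT} and Theorem~\ref{Thm:UAET_Imp_ARL_IDEAL}, which are precisely the two results you cite for the two directions.
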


   \begin{Ex}\label{Ex:First}
   \begin{enumerate}
      \item Suppose that $\HB =(1,1,1,0,\ldots)$. So $\HB$ is unimodal at each tail. Note that the ideal generated by $\G= \{x^3\}$ induces the sequence $\HB$.
      \item Suppose that $\HB = (1,2,3,2,1,0,\ldots)$. Then $h^{(1)}_\bullet=(1,1,1,0,\ldots)$. So $\HB$ is unimodal at each tail. The ideal $J=(x^3) \subset k[x]$ induces the sequence $h^{(1)}_\bullet$. Let $T_0 = \{2,1,0\}$, and $I \subset R=k[x,y]$ the ideal generated by $\G(J)$. Note that $2 > 1 > 0$ in the degree reverse lexicographic order, and $r_0 = 3$. Since $H(R/I,\bullet)=(1,2,3,3,\ldots)$, we have $d_0 = 3$ and $t_0 = H(R/I,d_0) - h_{d_0} = 1$. Define $g(2) = d_0 - |2| = 1$. Set $\G(K_1) = \G(I) \cup \{x^2y^{g(2)}=x^2y\}$, $\II^{(1)} = \{2\}$ and $T_{1} = \{1,0\}$. Note that $H(R/K_{1},\bullet) = (1,2,3,2,2,\ldots)$. Repeat the process, then we have
          \begin{align*}
             d_1 &= 4, t_1 = 1, g(1) = 4-|1| = 3, \G(K_2) = \G(K_1) \cup \{x y^3\}, \\
             \II^{(2)} &=\{2,1\}, T_{2} = \{0\}, \text{ and } H(R/K_{2},\bullet) = (1,2,3,2,1,1,\ldots).
          \end{align*}
          Repeat the process one more time, then
          \begin{align*}
             d_2 &= 5, t_2 = 1, g(1) = 5-|0| = 5, \G(K_3) = \G(K_2) \cup \{y^5\}, \\
             \II^{(3)} &=\{2,1,0\}, T_{3} = \emptyset, \text{ and } H(R/K_{3},\bullet) = (1,2,3,2,1,0,\ldots) = \HB.
          \end{align*}
          Set $K=K_3$. Thus one can see that the ideal generated by
          \[
             \G(K) = \{x^3, x^2y, xy^3, y^5\}
          \]
          induces the sequence $\HB$.
      \item Suppose that $\HB = (1,3,6,8,9,9,6,5,5,\ldots)$. Then it follows that $h^{(1)}_\bullet=(1,2,3,2,1,0,\ldots)$ and $h^{(2)}_\bullet=(1,1,1,0,\ldots)$. So $\HB$ is unimodal at each tail. As shown in (2), the ideal $J \subset S=k[x,y]$ generated by $\{x^3, x^2y, xy^3, y^5\}$ induces the sequence $h^{(1)}_\bullet$.

          Let $T_0 = \{(0,4),(1,2),(0,3),(2,0),(1,1),(0,2),(1,0),(0,1),(0,0)\}$, and $I \subset R=k[x,y,z]$ the ideal generated by $\G(J)$. Note that $(0,4) > (1,2) > \cdots > (0,1) > (0,0)$ in the degree reverse lexicographic order, and $r_0 = 5$. Since $H(R/I,\bullet)=(1,3,6,8,9,9,9,\ldots)$, we have $d_0 = 6$ and $t_0 = H(R/I,d_0) - h_{d_0} = 3$. Define $g(0,4) = d_0 - |(0,4)| = 2, g(1,2) = 3$ and $g(0,3) = 3$.

          Set $\G(K_1) = \G(I) \cup \{y^4z^2, xy^2z^3, y^3z^3\}$, $\II^{(1)} = \{(0,4),(1,2),(0,3)\}$ and $T_{1} = \{(2,0),(1,1),(0,2),(1,0),(0,1),(0,0)\}$. Note that $H(R/K_{1},\bullet) = (1,3,6,8,9,9,6,6,\ldots)$. Repeat this process, then we have
          \begin{align*}
             d_1 = 7, t_1 =& 1, g(2,0) = 7-|2| = 5, \G(K_2) = \G(K_1) \cup \{x^2 z^5\}, \text{ and } \\
             H(R/K_{2},\bullet) &= (1,3,6,8,9,9,6,5,5,\ldots) = \HB.
          \end{align*}
          Set $K=K_2$. Then the ideal generated by
          \[
             \G(K) = \left\{
                          \begin{array}{cccc}
                              x^3,& x^2y,  & xy^3,   & y^5, \\
                                  &        &         & y^4z^2, \\
                                  &        & xy^2z^3,& y^3z^3, \\
                                  & x^2z^5
                              \end{array}
             \right\}
          \]
          induces the sequence $\HB$.
   \end{enumerate}
   \end{Ex}

   \vskip 2cm


\section{\sc Fr\"{o}berg sequences and almost reverse lexicographic ideals}

In this section, we show that every Fr\"{o}berg sequence is unimodal at each tail. Let $P = \sum_{i=0}^{\infty} p_i z^i \in \ZZ[[z]]$ be a formal power series, and let $t = \min\{ d | p_d \le 0 \}$. Recall that by $|P| = | \sum_{i=0}^{\infty} p_i z^i|$, we mean the series $\sum_{i=0}^{\infty} q_i z^i$, where each $q_i \in  \ZZ$ is defined to be
\[
    q_i = \begin{cases}
              p_i, & \text{ if } i < t, \\
              0, & \text{ if } i \ge t.
          \end{cases}
\]

\begin{Defn}\label{Def:FRO_Seq}
   A sequence $\HB=(h_0,h_1,h_2,\ldots)$ is said to be a Fr\"{o}berg sequence if there are nonnegative integers $n,m$ and positive integers $d_1,\ldots,d_m$ such that
   \[
       \sum_{i=0}^{\infty} h_i z^i = \left| \frac{(1-z^{d_1})\cdots(1-z^{d_m})}{(1-z)^n} \right|.
   \]
   In this case we denote it by $\HB=|n;d_1,\ldots,d_m|$ if $m \ge 1$, and by $\HB=|n;\emptyset|$ if $m=0$.
\end{Defn}

To avoid making the same hypothesis in each theorems, if a sequence $\HB$ is given by a Fr\"{o}berg sequence $|n;d_1,\ldots,d_m|$, then we always assume that $n, m$ are nonnegative integers, and assume that $d_1,\ldots,d_m$ are positive integers.

\begin{Ex} \label{Ex:Trivial_Case_of_Fro_seq}
   \begin{enumerate}
      \item If $\HB=|0;d_1,\ldots,d_m|$, then $\HB=(1,0,\ldots)$. Thus $\HB$ is unimodal at each tail.
      \item If $\HB=|n;\emptyset|$, then $h_i = \binom{n-1+i}{i}$, where $\binom{a}{b} = 0$ for any integers $a, b$ with $a < b$. Hence $\HB$ is unimodal at each tail as shown in the proof of Proposition \ref{Prop:ArlImpDecT}.
   \end{enumerate}
\end{Ex}

For a sequence $\HB=(h_0,h_1,\ldots)$, recall that $r_i(\HB)$ is defined to be
\[
   r_i(\HB) = \min \{ d | h^{(i)}_d \le h^{(i)}_{d-1} \},
\]
for $0 \le i < \max \{1, h_1\}$ in the previous section. For a given Fr\"{o}berg sequence $\GB$, the following lemma shows when the induced sequence $g^{(1)}_\bullet$ can be a Fr\"{o}berg sequence.

\begin{Lem}\label{Lem:BasicP_Frob}
   Suppose that $\GB=|n;d_1,\ldots,d_m|$ is a Fr\"{o}berg sequence.
   \begin{enumerate}
      \item If $n \ge 1$, then $|n;d_1,\ldots,d_m,1| = |n-1;d_1,\ldots,d_m|$.
      \item For a positive integer $s$, if $\HB = |n;d_1,\ldots,d_m,s|$, then
            \[
                h_d = \begin{cases}
                         \max \{0, g_d - g_{d-s} \} = g_d - g_{d-s} > 0,  & \text{ if }  d < t, \\
                         0, & \text{ if } d \ge t,
                      \end{cases}
             \]
             where $t = \min \{ d \ge 0 \, | \, g_d \le g_{d-s} \}$ and $g_j = 0$ for $j<0$.
      \item Suppose $n \ge 1$. Then $g^{(1)}_{\bullet} = |n-1;d_1,\ldots,d_m|$ if and only if $g_d \le g_{d-1}$ for any $d \ge r_0(\GB)$.
   \end{enumerate}
\end{Lem}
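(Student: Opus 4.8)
The plan is to treat the three parts in the given order, with part (2) carrying essentially all of the content. Part (1) is a bare manipulation of rational functions: the operator $|\cdot|$ depends only on the power-series expansion of its argument, and
\[
\frac{(1-z^{d_1})\cdots(1-z^{d_m})(1-z)}{(1-z)^n}=\frac{(1-z^{d_1})\cdots(1-z^{d_m})}{(1-z)^{n-1}}
\]
as elements of $\ZZ[[z]]$ (here $n\ge 1$ is used so that the right-hand side makes sense), so $|n;d_1,\ldots,d_m,1|$ and $|n-1;d_1,\ldots,d_m|$ are the same sequence and there is nothing more to do.

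For part (2) I would introduce the honest rational series $P=\frac{(1-z^{d_1})\cdots(1-z^{d_m})}{(1-z)^n}=\sum_{i\ge 0}p_iz^i$. Recall $\GB=|P|$; with $t_0=\min\{d\mid p_d\le 0\}$ this means $g_i=p_i$ for $i<t_0$ and $g_i=0$ for $i\ge t_0$. Writing $(1-z^s)P=\sum_i q_iz^i$ with $q_i=p_i-p_{i-s}$ (and $p_j=0$ for $j<0$), we similarly have $h_i=q_i$ for $i<t_1$ and $h_i=0$ for $i\ge t_1$, where $t_1=\min\{d\mid q_d\le 0\}$. The key first step is the inequality $t_1\le t_0$: evaluating $q_{t_0}=p_{t_0}-p_{t_0-s}$ and using $p_{t_0}\le 0$ together with $p_{t_0-s}>0$ (when $t_0-s\ge 0$) or $p_{t_0-s}=0$ (when $t_0-s<0$) shows $q_{t_0}\le 0$. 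Granting this, for every $d<t_1$ we have $d-s<t_1\le t_0$, hence $g_d=p_d$ and $g_{d-s}=p_{d-s}$ (the convention $g_j=0$ for $j<0$ matching that for $p$), so $g_d-g_{d-s}=q_d=h_d>0$; in particular $\max\{0,g_d-g_{d-s}\}=g_d-g_{d-s}$ and $g_d>g_{d-s}$ for $d<t_1$, which already forces $t:=\min\{d\mid g_d\le g_{d-s}\}\ge t_1$. It then remains to check $g_{t_1}\le g_{t_1-s}$: if $t_1<t_0$ this is the inequality $q_{t_1}\le 0$ rewritten, and if $t_1=t_0$ it is immediate from $g_{t_1}=0\le g_{t_1-s}$. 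Thus $t=t_1$, and feeding $t=t_1$ back into the two descriptions $h_i=q_i=g_i-g_{i-s}$ ($i<t_1$), $h_i=0$ ($i\ge t_1$) produces precisely the stated formula for $h_d$.

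Part (3) I would then read off from (1) and (2). By (1), $|n-1;d_1,\ldots,d_m|=|n;d_1,\ldots,d_m,1|$, so applying (2) with $s=1$ to $\HB=|n;d_1,\ldots,d_m,1|$ gives $h_d=g_d-g_{d-1}$ for $d<t$ and $h_d=0$ for $d\ge t$, with $t=\min\{d\ge 0\mid g_d\le g_{d-1}\}$; since $g_0=1>0=g_{-1}$, this $t$ is exactly $r_0(\GB)$. On the other hand $g^{(1)}_0=1$ and $g^{(1)}_d=\max\{0,g_d-g_{d-1}\}$ for $d\ge 1$, so for $d<r_0(\GB)$ one has $g_d>g_{d-1}$ and hence $g^{(1)}_d=g_d-g_{d-1}=h_d$ (also $g^{(1)}_0=1=h_0$): the two sequences agree below $r_0(\GB)$ unconditionally. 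For $d\ge r_0(\GB)$ we have $h_d=0$, so $g^{(1)}_\bullet=\HB$ if and only if $\max\{0,g_d-g_{d-1}\}=0$ for all $d\ge r_0(\GB)$, i.e.\ $g_d\le g_{d-1}$ for all $d\ge r_0(\GB)$, which is the claimed equivalence.

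The main obstacle is the index bookkeeping in part (2): one must keep three truncation points distinct --- the truncation point $t_0$ of the genuine rational series $P$, the truncation point $t_1$ of $(1-z^s)P$, and the index $t$ in the statement, which is defined through the \emph{already truncated} sequence $\GB$ rather than through $P$ --- and the whole point is that all the relevant comparisons happen strictly below truncation, where $\GB$ and $P$ coincide, supplemented by the single boundary check at $d=t_1$. The degenerate situations ($d-s<0$, or $t_0$ or $t_1$ infinite) each need only a sentence of care and raise no genuine difficulty.
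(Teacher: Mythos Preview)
Your proof is correct and follows essentially the same approach as the paper. In part (2), the paper's argument is organized as a chain $\tilde{t}\le t\le p$ (in your notation, $t_1\le t\le t_0$) followed by the boundary check $t\le \tilde{t}$, whereas you first establish $t_1\le t_0$ directly and then sandwich $t$ between $t_1$ and $t_1$; but the underlying computations are identical, and parts (1) and (3) match the paper's proofs exactly.
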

\begin{proof}
   \begin{enumerate}
      \item It is clear by the definition of a Fr\"{o}berg sequence.
      \item Suppose that
            \[
                 F(z)=\sum_{i=0}^{\infty} \alpha_i z^i = \frac{(1-z^{d_1})\cdots(1-z^{d_{m}})}{(1-z)^n}.
            \]
            Let $p=\min\{ d \, | \, \alpha_d \le 0\}$. Then we have
            $g_i = \begin{cases}
                              \alpha_i, & \text{ if } i < p, \\
                              0, & \text{ otherwise.}
                        \end{cases}$ Furthermore
            \begin{align*}
                \sum_{i=0}^{\infty} h_i z^i &=\left| \frac{(1-z^{d_1})\cdots(1-z^{d_{m}})(1-z^s)}{(1-z)^n} \right| \\
                    &= \left| F(z)(1-z^s) \right| = \left| F(z) - F(z)z^s \right| \\
                    &= \left| \sum_{i=0}^{\infty} \alpha_i z^i - \sum_{i=0}^{\infty} \alpha_i z^{i+s}
                       \right|
                     = \left| \sum_{i=0}^{\infty} (\alpha_{i} - \alpha_{i-s}) z^i\right|,
            \end{align*}
            where we set $\alpha_{j} = 0$ for $j<0$.
            We let $\tilde{t} = \min \{ d \ge 0\, |\, \alpha_d \le \alpha_{d-s} \}$, then
            \[
                 h_d = \begin{cases}
                              \alpha_d - \alpha_{d-s}, & \text{ if }  d < \tilde{t}, \\
                              0, & \text{ otherwise. }
                       \end{cases}
            \]

            Now if $p = \infty$, i.e.\ $\alpha_d > 0$ for all $d$, then nothing is left to prove. So assume that $p < \infty$. Since $g_p = 0 \le g_{p-s}$, it follows that $t \le p$ by the definition of $t$. Now we will show that $\tilde{t} \le t$. Indeed, if $t=p$, then $\alpha_{t} \le 0 = g_{t} \le g_{t-s} = \alpha_{t-s}$. Or if $t<p$, then $\alpha_{t} = g_{t} \le g_{t-s} = \alpha_{t-s}$. In any case, $\alpha_{t} \le \alpha_{t-s}$. Hence it follows from the definition of $\tilde{t}$ that $\tilde{t} \le t$.

            Then we wish to show that $\tilde{t} = t$. It suffices to show $g_{\tilde{t}} \le g_{\tilde{t}-s}$. Note that $\tilde{t} \le t \le p$. If $\tilde{t} < p$, then $g_{\tilde{t}} = \alpha_{\tilde{t}} \le \alpha_{\tilde{t}-s} = g_{\tilde{t}-s}$. For the case $\tilde{t} = p$, we have $g_{\tilde{t}} = 0 \le g_{\tilde{t}-s}$, so we are done. Since $\tilde{t} = t \le p$, the second assertion follows.
      \item Let $\HB = |n;d_1,\ldots,d_m,1|$. By (1), it suffices to show $\HB = g^{(1)}_{\bullet}$ if and only if $g_d \le g_{d-1}$ for any $d \ge r_0(\GB)$. Since this is the case $s=1$ and $t=r_0(\GB)$ in (2), $\HB =  g^{(1)}_{\bullet}$ if and only if $0 = \max\{0, g_d-g_{d-1}\}$ for all $d \ge r_0(\GB)$, or equivalently, $g_d \le g_{d-1}$ for all $d \ge r_0(\GB)$, this is what we want to show.
      \end{enumerate}
\end{proof}

\begin{Remk}\label{Remk:h_1_of_Fro}
   If $\HB=|n;d_1,\ldots,d_m|$ is a Fr\"{o}berg sequence, then $h_1 \le n$. Indeed, we can show this by induction on $m$. If $m=0$, then $h_1 = \binom{n-1+1}{1} = n$. If $m > 1$, then by Lemma \ref{Lem:BasicP_Frob} (2), we have $h_1 = \max\{0, g_1 - g_{1-d_m}\}$, where $\GB=|n;d_1,\ldots,d_{m-1}|$. Hence $h_1 \le g_1 \le n$ by the induction hypothesis.
\end{Remk}

\begin{Lem}\label{Lem:Red_UaT_ALSO_Uat}
   Let $\GB = |n;d_1,\ldots,d_m|$ and $\HB = |n;d_1,\ldots,d_m,s|$ be Fr\"{o}berg sequences. Suppose that $\GB$ is unimodal at each tail.  If there is a positive integer $i$ with $h_i \le h_{i-1}$, then $h_d \le h_{d-1}$ for any $d \ge i$. In this case, $h^{(1)}_{\bullet} = |n-1;d_1,\ldots,d_m,s|$ if $n \ge 1$.
\end{Lem}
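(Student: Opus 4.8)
The plan is to make $\HB$ explicit in terms of $\GB$ by Lemma \ref{Lem:BasicP_Frob}, cut the monotonicity statement at $r_0(\GB)$, dispose of the upper range by hand, and push the lower range down through the chain $\GB, g^{(1)}_\bullet, g^{(2)}_\bullet,\dots$ by induction on the number of variables. First I would apply Lemma \ref{Lem:BasicP_Frob}(2) to write $h_d = g_d - g_{d-s}$ for $d<t$ and $h_d=0$ for $d\ge t$, where $t=\min\{d : g_d\le g_{d-s}\}$ and $g_j=0$ for $j<0$. If no $i\ge 1$ satisfies $h_i\le h_{i-1}$ there is nothing to prove, so fix such an $i$. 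If $r_0(\GB)=\infty$ then $g$ is strictly increasing and $t=\infty$; otherwise, since $\GB$ is unimodal at each tail, $g_d\le g_{d-1}$ for all $d\ge r_0(\GB)$. In either case one records the bounds $r_0(\GB)\le t\le r_0(\GB)+s$: a first $s$-step drop of $g$ cannot occur in the strictly increasing range $d<r_0(\GB)$, and at $d=r_0(\GB)+s$ one has $g_d\le g_{r_0(\GB)}=g_{d-s}$ because $g$ is non-increasing past $r_0(\GB)$.

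Next I would split according to $r_0(\GB)$. For $d\ge t$ we have $h_d=0\le h_{d-1}$; for $r_0(\GB)\le d<t$ the bound above gives $d-s<r_0(\GB)$, so $h_d-h_{d-1}=(g_d-g_{d-1})-(g_{d-s}-g_{d-1-s})\le 0$, since the first bracket is $\le 0$ (we are past $r_0(\GB)$) and the second is $\ge 0$ (we are before it). Hence $h_d\le h_{d-1}$ for every $d\ge r_0(\GB)$, which settles the case $i\ge r_0(\GB)$. When $i<r_0(\GB)$, for $1\le d<r_0(\GB)$ the ordinary differences of $g$ agree with its truncated differences, so $h_d-h_{d-1}=g^{(1)}_d-g^{(1)}_{d-s}$; in particular the hypothesis becomes $g^{(1)}_i\le g^{(1)}_{i-s}$, and it remains to prove $g^{(1)}_d\le g^{(1)}_{d-s}$ for all $d$ with $i\le d<r_0(\GB)$.

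Now $g^{(1)}_\bullet=|n-1;d_1,\dots,d_m|$ by Lemma \ref{Lem:BasicP_Frob}(3) (whose hypothesis holds because $\GB$ is unimodal at index $0$; note $n\ge 2$ here, since for $n\le 1$ one has $r_0(\GB)=1$), it is again a Fr\"oberg sequence unimodal at each tail, and $\{d<r_0(\GB)\}$ is exactly its support. Thus we are reduced to a statement of the same shape in one fewer variable, which I would isolate as: \emph{for a Fr\"oberg sequence $a_\bullet=|N;e_1,\dots,e_\ell|$ unimodal at each tail and any $c\ge 1$, the set $\{d\ge 1 : a_d\le a_{d-c}\}$ is upward closed on the support of $a_\bullet$.} I would prove this by induction on $N$ (with $c$ carried as an auxiliary parameter so the same reduction applies verbatim): the base cases $N\le 1$ are immediate because $|1;\,\cdots|$ is non-increasing; for the inductive step one repeats the split — the part of the support at or beyond $r_0(a_\bullet)$ is handled by unimodality of $a_\bullet$, and the part below $r_0(a_\bullet)$ is converted, as in the previous paragraph, into the defect-by-$c$ condition for $a^{(1)}_\bullet=|N-1;e_1,\dots,e_\ell|$, to which the inductive hypothesis applies. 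Finally, the second assertion $h^{(1)}_\bullet=|n-1;d_1,\dots,d_m,s|$ follows from the first by Lemma \ref{Lem:BasicP_Frob}(3) applied to $\HB$, since the first assertion is precisely the statement that $h_d\le h_{d-1}$ for all $d\ge r_0(\HB)$.

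I expect the main obstacle to be the inductive step, and specifically the behaviour of the defect-by-$c$ just past the turning point of $a_\bullet$: beyond $r_0(a_\bullet)$, unimodality of $a_\bullet$ yields $a_d\le a_{d-c}$ immediately only once $d-c\ge r_0(a_\bullet)-1$, leaving a transition band of $c-1$ indices where mere unimodality of $a_\bullet$ is insufficient (as one sees from unimodal sequences that violate Macaulay's bound). Controlling that band is exactly the place where one must use the full strength of the hypothesis ``unimodal at each tail'' — equivalently, that every term of the chain $\GB,g^{(1)}_\bullet,g^{(2)}_\bullet,\dots$ is strictly increasing or unimodal — rather than just unimodality of $g$; I would expect to close it by comparing, at the relevant level, with Lemma \ref{Lem:BasicP_Frob}(2) once more, or by tracking the quantities $r_i$ along the chain.
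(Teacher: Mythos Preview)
Your setup and reduction are essentially the paper's. You correctly express $h$ in terms of $g$ via Lemma \ref{Lem:BasicP_Frob}(2), dispose of $d\ge r_0(\GB)$ by the clean computation $h_d-h_{d-1}=(g_d-g_{d-1})-(g_{d-s}-g_{d-s-1})\le 0$ (first bracket $\le 0$ past $r_0(\GB)$, second $\ge 0$ since $d-s<r_0(\GB)$ in that range), and reduce the remaining range $i\le d<r_0(\GB)$ to the statement that $\{d:g^{(1)}_d\le g^{(1)}_{d-s}\}$ is upward closed on the support of $g^{(1)}_\bullet$. The final assertion via Lemma \ref{Lem:BasicP_Frob}(3) is also correct.

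The gap is in your proposed induction on $N$ for that auxiliary claim. The ``conversion to $a^{(1)}_\bullet$'' you invoke for $d<r_0(a_\bullet)$ is not analogous to your earlier conversion: before, you had $h_d-h_{d-1}=g^{(1)}_d-g^{(1)}_{d-s}$, a first difference of $h$ becoming an $s$-step defect of $g^{(1)}$; but in the auxiliary claim the quantity is already $a_d-a_{d-c}$, not a first difference, and there is no formula sending it to an expression of the same shape in $a^{(1)}$. In fact the range $d<r_0(a_\bullet)$ is vacuous: there $a_\bullet$ is strictly increasing, so $a_d>a_{d-c}$ always, and the hypothesis $a_i\le a_{i-c}$ already forces $i\ge r_0(a_\bullet)$. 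So all the content sits in the transition band you flagged, and the induction never engages.

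That band closes without induction, using only unimodality of $a=g^{(1)}_\bullet$ (which you have, since $\GB$ is unimodal at each tail). Write $\rho=r_0(a)$; from $a_i\le a_{i-c}$ one gets $i\ge\rho$. For $d\ge i$ with $d-c\ge\rho-1$, non-increase of $a$ on $[\rho-1,\infty)$ gives $a_d\le a_{d-c}$. For $i\le d$ with $d-c<\rho-1$, chain the monotonicities: $a_d\le a_i$ (non-increasing past $\rho$), $a_i\le a_{i-c}$ (hypothesis), and $a_{i-c}\le a_{d-c}$ (strict increase before $\rho$, with $i-c\le d-c<\rho$). This is exactly the paper's argument, phrased there by contradiction: assuming $g^{(1)}_d>g^{(1)}_{d-s}$, the same chain gives $g^{(1)}_{d-s}<g^{(1)}_{i-s}$, forcing $r_1(\GB)\le d-s$, whence unimodality yields $g^{(1)}_d\le g^{(1)}_{d-s}$. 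So drop the induction on $N$; one pass with unimodality of $g^{(1)}_\bullet$ finishes it.
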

\begin{proof}
   If we set $\nu = \min\{ j \ge 0 \,| \, g_j \le g_{j-s} \}$, then it follows by Lemma \ref{Lem:BasicP_Frob}(2) that
   \begin{equation}\label{Eqn:FroSeqbyOne}
       h_j = \begin{cases}
                 \max \{0, g_j - g_{j-s} \} = g_j - g_{j-s} > 0,  & \text{ if }  j < \nu, \\
                 0, & \text{ if } j \ge \nu.
             \end{cases}
   \end{equation}
   Suppose on the contrary that there is an integer $d > i $ with  $h_d > h_{d-1}$. From the equation {\rm (\ref{Eqn:FroSeqbyOne})}, this implies that $d < \nu$. Hence $h_j = g_j - g_{j-s} > 0$ for any $j \le d$, in particular, for $j = i-1, i, d-1$, and $d$.

   Since $h_d > h_{d-1}$, it follows that $g_d - g_{d-s} > g_{d-1} - g_{d-1-s}$, or equivalently, $g_d - g_{d-1} > g_{d-s} - g_{d-s-1}$. Now if $d \ge r_0(\GB)$, then $0 \ge g_d - g_{d-1} > g_{d-s} - g_{d-s-1}$, hence $d-s \ge r_0(\GB)$. Since $\GB$ is unimodal at each tail, then we have
   \[
        g_d - g_{d-s} = \sum_{j=1}^{s} g_{d-s+j} - g_{d-s+j-1} \le 0,
   \]
   which contradicts to $g_d - g_{d-s} > 0$. This shows that $d < r_0(\GB)$, and hence it also follows that $g^{(1)}_d > g^{(1)}_{d-s}$, since $g_d - g_{d-1} > g_{d-s} - g_{d-s-1}$.

   On the other hand, since $h_{i} \le h_{i-1}$, we have $g_i - g_{i-s} \le g_{i-1} - g_{i-1-s}$, or equivalently, $g_i - g_{i-1} \le g_{i-s} - g_{i-s-1}$. Since $i < d < r_0(\GB)$, this implies that $g^{(1)}_{i} \le g^{(1)}_{i-s}$. Hence $D(\GB) \le 1$ and $r_1(\GB) \le i$. Since $\GB$ is unimodal at each tail, it follows that $g^{(1)}_{d} \le g^{(1)}_{i}$. So we have
   \[
      g^{(1)}_{d-s} < g^{(1)}_{d} \le g^{(1)}_{i} \le g^{(1)}_{i-s}.
   \]
   In particular $g^{(1)}_{d-s} < g^{(1)}_{i-s}$ . Since $i<d$, we should have $r_1(\GB) \le d-s$. Since $\GB$ is unimodal at each tail, this induces $g^{(1)}_{d-s} \ge g^{(1)}_{d}$. But it contradicts to $g^{(1)}_{d-s} < g^{(1)}_{d}$. Hence the first assertion follows.

   The last assertion follows from Lemma \ref{Lem:BasicP_Frob}(3).
\end{proof}

\begin{Prop}\label{Prop:FroSeqIsARL}
   Let $\HB=|n;d_1,\ldots,d_m|$ be a Fr\"{o}berg sequence. Then $\HB$ is induced from an almost reverse lexicographic ideal $K$ in the polynomial ring $R=k[x_1,\ldots,x_l]$, where $l = n$ if $n \ge 1$, and $l=1$ if $n=0$.
\end{Prop}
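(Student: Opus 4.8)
The plan is to reduce the statement to the combinatorial claim that every Fr\"oberg sequence is unimodal at each tail, and then to prove that claim by induction. Once $\HB$ is known to be unimodal at each tail, Theorem~\ref{Thm:UAET_Imp_ARL_IDEAL} produces an almost reverse lexicographic ideal in $k[x_1,\ldots,x_{h_1}]$ (in $k[x_1]$ if $h_1=0$) with Hilbert function $\HB$, and by Remark~\ref{Remk:h_1_of_Fro} we have $h_1\le n$. If $h_1=n\ge 1$ this already gives what we want, and if $n=0$ then $\HB=(1,0,\ldots)$ by Example~\ref{Ex:Trivial_Case_of_Fro_seq}(1), realized by $(x_1)\subset k[x_1]$. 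When $0\le h_1<n$, I would take the almost reverse lexicographic ideal $I'\subset k[x_1,\ldots,x_{h_1}]$ just produced and set $K=(x_1,\ldots,x_{n-h_1})+\phi(I')\subset R=k[x_1,\ldots,x_n]$, where $\phi$ is the index shift $x_i\mapsto x_{n-h_1+i}$; then $R/K\cong k[x_1,\ldots,x_{h_1}]/I'$ has Hilbert function $\HB$, and $K$ is again almost reverse lexicographic. For this last point one checks that $\G(K)=\{x_1,\ldots,x_{n-h_1}\}\cup\phi(\G(I'))$, that the almost reverse lexicographic condition is vacuous for the variable generators $x_1,\ldots,x_{n-h_1}$, and that for a generator $\phi(N')$ any strictly larger monomial of the same degree is either divisible by some $x_j$ with $j\le n-h_1$ (hence lies in $K$) or is supported on the top $h_1$ variables, on which the degree reverse lexicographic order is carried by $\phi$ to that of $k[x_1,\ldots,x_{h_1}]$, so one concludes from the almost reverse lexicographicness of $I'$.

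For the combinatorial claim I would induct on $n+m$. The cases $m=0$ (so $\HB=|n;\emptyset|$, handled as in Example~\ref{Ex:Trivial_Case_of_Fro_seq}(2) and the proof of Proposition~\ref{Prop:ArlImpDecT}) and $n=0$ (so $\HB=(1,0,\ldots)$) are immediate, so assume $n,m\ge 1$. Put $s=d_m$ and $\GB=|n;d_1,\ldots,d_{m-1}|$, which is unimodal at each tail by the induction hypothesis. If $s=1$ then $\HB=|n-1;d_1,\ldots,d_{m-1}|$ by Lemma~\ref{Lem:BasicP_Frob}(1), and we conclude by induction. If $s\ge 2$ then $h_1=n$ (the coefficient of $z^1$ in $\prod(1-z^{d_i})/(1-z)^n$ is $n$ and no truncation occurs through degree $1$, exactly as in Remark~\ref{Remk:h_1_of_Fro}), so ``unimodal at each tail'' for $\HB$ means that $h^{(i)}_\bullet$ is unimodal for every $D(\HB)\le i<n$. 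Now split on $r_0(\HB)$. If $r_0(\HB)=\infty$, then $\HB$ is strictly increasing, hence it equals the untruncated series and $h^{(1)}_\bullet=|n-1;d_1,\ldots,d_m|$; this is unimodal at each tail by the induction hypothesis, and since $D(\HB)=1+D(h^{(1)}_\bullet)$ and $h^{(i)}_\bullet=(h^{(1)}_\bullet)^{(i-1)}$, that is exactly the desired conclusion. If $r_0(\HB)<\infty$, then some positive $i$ has $h_i\le h_{i-1}$, so Lemma~\ref{Lem:Red_UaT_ALSO_Uat} applies and yields both that $h_d\le h_{d-1}$ for all $d\ge r_0(\HB)$ (so $h^{(0)}_\bullet=\HB$ is unimodal and $D(\HB)=0$) and that $h^{(1)}_\bullet=|n-1;d_1,\ldots,d_m|$; the latter is unimodal at each tail by induction, and since $(h^{(1)}_\bullet)^{(j)}$ is vacuously unimodal whenever $j<D(h^{(1)}_\bullet)$ (then $r_j=\infty$), we get that $h^{(i)}_\bullet$ is unimodal for all $0\le i<n$, completing the induction.

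The essential inequality --- that adjoining a form to a Fr\"oberg sequence which is unimodal at each tail yields a sequence that never strictly increases once it has stopped doing so --- is already packaged as Lemma~\ref{Lem:Red_UaT_ALSO_Uat}, so the argument above is mostly organizational. The points that need care are: taking $n+m$ (not $n$ or $m$ alone) as the induction parameter, since the step reduces simultaneously to $\GB$ and to $h^{(1)}_\bullet$; using the identity $h_1=n$ in the case that all $d_i\ge 2$ to reconcile the range $D(\HB)\le i<h_1$ of Definition~\ref{Def:UniTail} with the range $i<n$ produced by the induction (together with the remark that $(h^{(1)}_\bullet)^{(j)}$ is vacuously unimodal below $D(h^{(1)}_\bullet)$); and, in the reduction step, the verification that the index shift $\phi$ preserves the almost reverse lexicographic property. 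I expect this last verification, together with the bookkeeping relating $D(\HB)$ to $D(h^{(1)}_\bullet)$, to be where the main --- though modest --- effort lies.
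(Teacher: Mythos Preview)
Your argument is correct and matches the paper's: reduce to proving every Fr\"oberg sequence is unimodal at each tail, then invoke Theorem~\ref{Thm:UAET_Imp_ARL_IDEAL} and pad with linear generators $x_1,\ldots,x_{n-h_1}$ when $h_1<n$ (the paper cites Proposition~\ref{Prop:EquivCondARL} for the almost reverse lexicographic check you do by hand, and organizes the induction as outer on $n$ and inner on $m$ rather than on $n+m$, but the content is the same). One small slip: your assertion that $s=d_m\ge 2$ forces $h_1=n$ actually needs \emph{all} $d_i\ge 2$, which you can arrange by reordering and stripping any $d_j=1$ via Lemma~\ref{Lem:BasicP_Frob}(1) first---as your own closing paragraph already recognizes.
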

\begin{proof}
   If $m=0$, then we are done as shown in Example \ref{Ex:Trivial_Case_of_Fro_seq} (2). Hence assume that $m \ge 1$. Note that $n \ge h_1$ by Remark \ref{Remk:h_1_of_Fro}. If $h_1=0$, then $\HB = (1,0,\ldots)$. Hence the sequence $\HB$ is induced from the ideal $K=(x_1,\ldots,x_l)$ in the ring $R=k[x_1,\ldots,x_l]$, where $l=1$ if $n=0$, and $l=n$ if $n \ge 1$. So we may assume that $h_1 \ge 1$. We claim that $\HB$ is unimodal at each tail. If our claim is true, then by Theorem \ref{Thm:UAET_Imp_ARL_IDEAL}, there is an almost reverse lexicographic ideal $J$ in the polynomial ring $S=k[x_{n-h_1+1},\ldots,x_{n}]$ such that $H(S/J,d) = h_d$ for all $d$. If $n=h_1$, then the ideal $J$ is the almost reverse lexicographic ideal we want to find. Suppose that $n > h_1$. Let $K$ be the ideal in $R=k[x_1,\ldots,x_n]$ generated by $\{x_1,\ldots,x_{n-h_1}\} \cup \G(J)$. By Proposition \ref{Prop:EquivCondARL}, then $K$ is an  almost reverse lexicographic ideal such that the Hilbert function of $R/K$ is the same with that of $S/J$. This shows that our assertion holds.

   Now we will show that our claim is true, i.e.\ the sequence $\HB$ with $h_1 \ge 1$ is unimodal at each tail by induction on $n$. Since $n \ge h_1 \ge 1$, we have to prove when $n=1$. Let $t = \min \{d_1,\ldots,d_m\}$. Then we have $\HB = (1,\ldots,1,0,0,\ldots)$, where $h_d = 1$ for any $d < t$, and $h_d = 0$ for any $d \ge t$. So we are done.

   For the general case, suppose $n>1$. We have to show that $\HB = |n;d_1,\ldots,d_m|$ is unimodal at each tail. We do induction on $m$. If $m=1$, then $\HB = |n; d_1|$. Let $I$ be the ideal $(x_1^{d_1})$ in the polynomial ring $R=k[x_1,\ldots,x_n]$. Since $H(R/I,d) = h_d$, we are done. Hence it is enough to show if $\GB=|n;d_1,\ldots,d_m|$ is a Fr\"{o}berg sequence which is unimodal at each tail, then for any positive integer $s$, the sequence $\HB=|n;d_1,\ldots,d_m,s|$ is also unimodal at each tail. By Lemma \ref{Lem:Red_UaT_ALSO_Uat}, we have $h^{(1)}_{\bullet} = |n-1;d_1,\ldots,d_m,s|$. By the induction hypothesis, $h^{(1)}_{\bullet}$ is unimodal at each tail. Hence, in order to show that $\HB$ is unimodal at each tail, it suffices to prove for only the case $D(\HB) = 0$, i.e.\ if $r_0(\HB) < \infty$, then $h_d \le h_{d-1}$ for any $d \ge r_0(\HB)$. But we have already shown it in Lemma \ref{Lem:Red_UaT_ALSO_Uat}. So we are done.
\end{proof}

\begin{Ex}
   \begin{enumerate}
      \item  Suppose that $\HB = \left| 3; 3, 3, 5 \right|$. Then it follows that $\HB = (1,3,6,8,9,8,6,3,1,0,\ldots)$. Using the process in Theorem \ref{Thm:UAET_Imp_ARL_IDEAL}, we can construct the almost reverse lexicographic ideal $K$ in $R=k[x,y,z]$ such that the Hilbert function of $R/K$ is given by the sequence $\HB$. 
          \[
             \G(K) = \left\{
                          \begin{array}{cccc}
                              x^3,& x^2y,  & xy^3,   & y^5, \\
                                  &        &         & y^4z, \\
                                  &        & xy^2z^3,& y^3z^3, \\
                                  & x^2z^5,& xyz^5,&   y^2z^5,\\
                                  &        & xz^7,&    yz^7,\\
                                  &        &      &    z^9
                              \end{array}
             \right\}.
          \]
   \end{enumerate}
\end{Ex}

   \vskip 2cm


\bibliographystyle{amsalpha}

\end{document}